\theoremstyle{plain}
\newtheorem{thm}{Theorem}[section]
\newtheorem{prop}[thm]{Proposition}
\newtheorem{lem}[thm]{Lemma}
\newtheorem{cor}[thm]{Corollary}
\newtheorem*{mainthm}{Main Lemma}
\newtheorem*{thmA}{Theorem A}
\newtheorem*{thmB}{Theorem B}
\newtheorem*{stretching}{Stretching Lemma}
\newtheorem*{subgraph}{Subgraph Lemma}
\newtheorem*{question}{Question}
\theoremstyle{definition}
\theoremstyle{remark}
\newcommand{\fancy}[1]{\mathcal{#1}}
\newcommand{\IN}{\mathbb{N}}
\newcommand{\D}{\fancy{D}}
\newcommand{\set}[1]{\left\{ #1 \right\}}
\newcommand{\func}[3]{#1\colon #2 \rightarrow #3}
\newcommand{\irange}[1]{\left\{1,\ldots,#1\right\}}
\newcommand{\parens}[1]{\left( #1 \right)}
\newcommand{\DefinedAs}{\mathrel{\mathop:}=}
\def\D{\fancy{D}}
\def\soft#1{\leavevmode\setbox0=\hbox{h}\dimen7=\ht0\advance
	\dimen7 by-1ex\relax\if t#1\relax\rlap{\raise.6\dimen7
		\hbox{\kern.3ex\char'47}}#1\relax\else\if T#1\relax
	\rlap{\raise.5\dimen7\hbox{\kern1.3ex\char'47}}#1\relax
	\else\if d#1\relax\rlap{\raise.5\dimen7\hbox{\kern.9ex
			\char'47}}#1\relax\else\if D#1\relax\rlap{\raise.5\dimen7
		\hbox{\kern1.4ex\char'47}}#1\relax\else\if l#1\relax
	\rlap{\raise.5\dimen7\hbox{\kern.4ex\char'47}}#1\relax
	\else\if L#1\relax\rlap{\raise.5\dimen7\hbox{\kern.7ex
			\char'47}}#1\relax\else\message{accent \string\soft
		\space #1 not defined!}#1\relax\fi\fi\fi\fi\fi\fi} 
\def\erdos{Erd\H{o}s}
\def\hladky{Hladk{\'y}}
\def\kral{Kr{\'a}{\soft{l}}}
\def\adj{\leftrightarrow}
\def\nonadj{\not\!\leftrightarrow}
\renewcommand{\restriction}{\mathord{\upharpoonright}}
\renewcommand{\vec}{\overrightarrow}
\newcommand{\aside}[1]{\marginnote{\scriptsize{#1}}[0cm]}
\newcommand{\aaside}[2]{\marginnote{\scriptsize{#1}}[#2]}
\begin{document}
	\title{Beyond Degree Choosability}
	\author{Daniel W. Cranston\thanks{Department of Mathematics and Applied
			Mathematics, Viriginia Commonwealth University, Richmond, VA;
			\texttt{dcranston@vcu.edu}; 
			Research of the first author is partially supported by NSA Grant
			98230-15-1-0013.}
		\and
		Landon Rabern\thanks{LBD Data Solutions, Lancaster, PA;
			\texttt{landon.rabern@gmail.com}}
	}
	
	\maketitle
	\begin{abstract}
		Let $G$ be a connected graph with maximum degree $\Delta$.  Brooks' theorem
		states that $G$ has a $\Delta$-coloring unless $G$ is a complete graph or an
		odd cycle.  A graph $G$ is \emph{degree-choosable} if $G$ can be properly
		colored from its lists whenever each vertex $v$ gets a list of $d(v)$ colors.
		In the context of list coloring, Brooks' theorem can be strengthened to the
		following.  Every connected graph $G$ is degree-choosable unless each block of
		$G$ is a complete graph or an odd cycle; such a graph $G$ is a \emph{Gallai
			tree}.  
		
		This degree-choosability result was further strengthened to Alon--Tarsi orientations;
		these are orientations of $G$ in which the number of spanning Eulerian
		subgraphs with an even number of edges differs from the number with an odd
		number of edges.  A graph $G$ is \emph{degree-AT} if $G$ has an Alon--Tarsi
		orientation in which each vertex has indegree at least 1.  
		Alon and Tarsi showed that if $G$ is degree-AT, then $G$ is also
		degree-choosable.
		\hladky, \kral, and
		Schauz showed that a connected graph is degree-AT if and only if it is not a
		Gallai tree.  In this paper, we consider pairs $(G,x)$ where $G$ is a connected
		graph and $x$ is some specified vertex in $V(G)$.  We characterize pairs such
		that $G$ has no Alon--Tarsi orientation in which each vertex has indegree at
		least 1 and $x$ has indegree at least 2.  When $G$ is 2-connected, the
		characterization is simple to state.
	\end{abstract}
	
	\section{Introduction}
	
	Brooks' theorem is one of the fundamental results in graph coloring.
	For every connected graph $G$, it says that $G$ has a $\Delta$-coloring
	unless $G$ is a complete graph $K_{\Delta+1}$ or an odd cycle.  When we seek to
	prove coloring results by induction, we often want to color a subgraph $H$
	where different vertices have different lists of allowable colors (those not
	already used on their neighbors in the coloring of $G-H$).  This gives rise to
	list coloring.  Vizing~\cite{vizing1976} and, independently, \erdos,
	Rubin, and Taylor~\cite{ERT} extended Brooks' theorem to list coloring. They
	proved an analogue of Brooks' theorem when each vertex $v$ has $\Delta$
	allowable colors (possibly different colors for different vertices).  
	\erdos, Rubin, and Taylor~\cite{ERT} and Borodin~\cite{borodin1977criterion} 
	strengthened this Brooks' analogue to the following result, where a
	\emph{Gallai tree}\aside{Gallai tree} is a connected graph in which each
	block is a complete graph
	or an odd cycle.  
	
	\begin{thmA}
		If $G$ is connected and not a Gallai tree, then for any list assignment $L$
		with $|L(v)|=d(v)$ for all $v\in V(G)$, graph $G$ has a proper coloring
		$\varphi$ with $\varphi(v)\in L(v)$ for all $v$.
	\end{thmA}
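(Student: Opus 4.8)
The plan is to prove the contrapositive-flavored statement: every connected graph $G$ that is not a Gallai tree is \emph{degree-choosable}, meaning properly $L$-colorable whenever $|L(v)|\geq d(v)$ for all $v$ (equivalently $|L(v)|=d(v)$, since oversized lists only help). I would induct on $|V(G)|$, with one reusable engine: if $G$ is connected and contains a proper \emph{induced} connected subgraph $H$ with $|V(H)|\geq 2$ that is itself degree-choosable, then $G$ is degree-choosable. To prove the engine, order $V(G)\setminus V(H)$ as $v_1,\dots,v_k$ by decreasing distance to $H$, so each $v_i$ has a neighbor in $\{v_{i+1},\dots,v_k\}\cup V(H)$ (possible since $G$ is connected); color $v_1,\dots,v_k$ greedily, each time at most $d_G(v_i)-1<|L(v_i)|$ neighbors are already colored, so a color is free. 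Afterwards every $v\in V(H)$ has lost at most $|N_G(v)\setminus V(H)|$ colors, so its surviving list has size at least $d_G(v)-|N_G(v)\setminus V(H)|=d_H(v)$, using crucially that $H$ is induced; finish by degree-choosability of $H$, and note that the resulting coloring of $V(H)$ is automatically proper in $G$ because $H$ is induced.

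The base cases are two families. First, even cycles are $2$-choosable by the classical argument: if the $2$-lists are not all equal, begin at an edge whose two lists differ and propagate around so the last vertex colored "sees" a color outside its list. Second, theta graphs $\Theta_{a,b,c}$ (two branch vertices joined by three internally disjoint paths) are degree-choosable: here the two branch vertices get $3$-lists and the interior vertices get $2$-lists, and a short direct argument colors the branch vertices first --- there are enough choices for them to avoid leaving any interior vertex with a forbidden color-pair equal to its whole list --- and then propagates along the three paths. (Small complete graphs $K_r$ are \emph{not} degree-choosable, but they are Gallai trees, so they only ever appear as configurations to be avoided, never as the subgraph $H$.)

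For the inductive step, let $G$ be connected and not a Gallai tree. Some block $B$ of $G$ is neither a complete graph nor an odd cycle; such a $B$ is $2$-connected with $|V(B)|\geq 4$, and it is not a Gallai tree (a $2$-connected Gallai tree is a clique or an odd cycle). Since blocks are induced, if $B\neq G$ then $B$ is a proper induced connected subgraph, degree-choosable by induction, and the engine finishes. So assume $G=B$ is $2$-connected, not complete, and not an odd cycle. If $G$ is an even cycle or a theta graph, we are done by the base cases. Otherwise $G$ is $2$-connected and not a cycle, so an ear decomposition yields a theta subgraph; the task is to extract a \emph{proper induced} subgraph of $G$ that is again $2$-connected and not a Gallai tree. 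Take a theta subgraph $\Theta$ with $|V(\Theta)|$ minimum. A chord of a theta always produces a strictly smaller theta, except in the sporadic case $\Theta=\Theta_{1,2,2}$ with $G[V(\Theta)]=K_4$; hence in the generic case $G[V(\Theta)]=\Theta$ is an induced theta, which is proper since $G$ is neither a cycle nor a theta, and the engine applies. In the sporadic case $G$ has an induced $K_4$ but $G\neq K_4$ (as $G$ is not complete), so $|V(G)|\geq 5$, and $2$-connectivity forces, one vertex further out, an induced theta on a proper vertex subset, to which the engine again applies.

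I expect the last step --- the structural case analysis guaranteeing a proper induced degree-choosable subgraph inside a "large" $2$-connected non-complete non-cycle graph --- to be the main obstacle: it is elementary but genuinely case-heavy, since one must rule out or directly dispatch the finitely many dense or chord-poor exceptions (such as $K_4$, small wheels, and cycles with very few chords). By comparison, the extension engine, the block reduction, and the degree-choosability of even cycles and theta graphs are routine.
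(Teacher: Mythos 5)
Your overall architecture is sound and is the classical Erd\H{o}s--Rubin--Taylor/Borodin route: reduce to a $2$-connected block, exhibit a proper induced degree-choosable subgraph (even cycle or theta graph), and absorb the rest greedily. Your ``engine'' is the list-coloring form of the paper's Subgraph Lemma, and your minimum-theta chord analysis is correct: one can check that any chord of a vertex-minimal theta subgraph yields a theta on strictly fewer vertices except when the theta is $\Theta_{1,2,2}=K_4-e$ and the chord completes a $K_4$. The genuine gap is the sporadic case. The assertion that $2$-connectivity forces an induced theta ``one vertex further out'' from the induced $K_4$ is not an argument, and in its natural reading it is false: let $G$ be $K_4$ on $\{a,b,c,d\}$ plus a path $a\,w_1\,w_2\,b$. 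This $G$ is $2$-connected, not complete, not a cycle, not a theta, its only minimum thetas sit inside the $K_4$, and every vertex outside the clique has exactly one neighbor in it, so no single added vertex produces a theta; you need the whole ear, and you must then verify that the resulting theta (here on $\{a,b,c,w_1,w_2\}$) is \emph{induced}, which in general requires choosing the ear chordless and controlling edges from its interior back to the remaining clique vertices (an interior vertex adjacent to $c$ forces a rerouting). This case analysis --- essentially the content of Rubin's block lemma, which the paper explicitly cites as the engine of the Hladk\'y--Kr\'a\soft{l}--Schauz proof --- is exactly what you defer, so as written the proof is incomplete, though you correctly identify it as the crux.

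For comparison, the paper does not take this route at all: it proves the stronger Alon--Tarsi statement (Lemma~\ref{DegreeATClassification}) by the Kostochka--Stiebitz--Wirth scheme. It takes a vertex-minimal counterexample $G$, notes that $G-v$ is a Gallai tree for every non-cutvertex $v$, deletes a minimum-degree vertex $v$, and uses the twin-vertex Lemma~\ref{GeneralEulerLemma} (applied to complete endblocks of $G-v$) together with the Stretching Lemma to force $G$ to be complete or an odd cycle. That approach avoids Rubin's block lemma and all of the induced-subgraph extraction entirely, at the cost of needing the AT-specific machinery; your approach, once the block-lemma case analysis is completed, is more elementary but proves only the choosability statement.
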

	
	The graphs in Theorem~A are
	\emph{degree-choosable}\aaside{degree-choosable}{-.3cm}. 
	It is easy to check that every Gallai tree is not degree-choosable.  So the set
	of all connected graphs that are not degree-choosable are precisely the Gallai trees.
	\hladky, \kral, and Schauz~\cite{HKS} extended this characterization to the setting of
	Alon--Tarsi orientations.
	
	For any digraph $D$, a \emph{spanning Eulerian subgraph} is one in which each vertex
	has indegree equal to outdegree.  The \emph{parity} of a spanning Eulerian
	subgraph is the parity of its number of edges.  For an orientation of a graph
	$G$, let EE (resp.~EO) denote the number of even (resp.~odd) spanning Eulerian
	subgraphs.  An orientation is \emph{Alon--Tarsi}\aaside{Alon--Tarsi orientation}{-.4cm}
	(or AT) if EE and EO differ. 
	A graph $G$ is \emph{$f$-AT}\aside{$f$-AT, $k$-AT} if it has an Alon--Tarsi
	orientation $D$ such that $d^+(v)\le f(v)-1$ for each vertex $v$.  In
	particular, $G$ is \emph{degree-AT}\aside{degree-AT} (resp.~\emph{$k$-AT}) if
	it is $f$-AT, where $f(v)=d(v)$ (resp.~$f(v)=k$) for all $v$.  Similarly, a
	graph $G$ is \emph{$f$-choosable}\aside{$f$-choosable} if $G$ has a proper
	coloring $\varphi$ from any list assignment $L$ such that $|L(v)|=f(v)$ for all
	$v\in V(G)$.  Alon and Tarsi~\cite{AlonT} used algebraic methods to prove the
	following theorem for choosability.  Later, Schauz~\cite{schauz2010flexible}
	strengthened the result to paintability, which we discuss briefly in
	Section~\ref{extensions}.  
	
	\begin{thmB}
				\hypertarget{target:thmB}{}
		For a graph $G$ and $\func{f}{V(G)}{\IN}$,
		if $G$ is $f$-AT, then $G$ is also $f$-choosable.
	\end{thmB}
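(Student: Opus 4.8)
The plan is to prove Theorem~B by the polynomial method. For each edge of $G$ fix an arbitrary linear order of its two endpoints and define the \emph{graph polynomial}
\[
  P_G \DefinedAs \prod_{\{u,v\} \in E(G)} (x_u - x_v),
\]
where in each factor $u$ denotes the smaller endpoint in the chosen order; altering these choices only multiplies $P_G$ by $\pm 1$, so no conclusion below depends on them. Note $\deg P_G = \card{E(G)}$, and a map $\varphi\colon V(G)\to\IN$ satisfies $P_G(\varphi)\ne 0$ precisely when $\varphi$ is a proper coloring of $G$. We combine two ingredients: (i) a formula for one distinguished coefficient of $P_G$ in terms of an orientation $D$ of $G$; and (ii) Alon's Combinatorial Nullstellensatz, which states that if a polynomial in $x_1,\dots,x_n$ has degree $\sum_i t_i$ and the coefficient of $\prod_i x_i^{t_i}$ is nonzero, then the polynomial does not vanish on $S_1\times\cdots\times S_n$ whenever $\card{S_i}\ge t_i+1$ for all $i$.

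The main step is the claim that for every orientation $D$ of $G$ the coefficient of $\prod_{v\in V(G)} x_v^{d^+(v)}$ in $P_G$ equals $\pm(\mathrm{EE}-\mathrm{EO})$, where $d^+$, $\mathrm{EE}$, and $\mathrm{EO}$ refer to $D$. To prove it, expand $P_G$: each resulting monomial comes from selecting, in every factor $(x_u-x_v)$, either the term $x_u$ or the term $-x_v$. Reading each selection as orienting that edge away from the selected endpoint gives an orientation $D'$ of $G$, the monomial is $\pm\prod_v x_v^{d^+_{D'}(v)}$, and its sign is $(-1)^{r(D')}$ where $r(D')$ counts the edges whose selected endpoint is the larger one in the fixed order. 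Now $D'$ has the same outdegree sequence as $D$ if and only if $D'$ is obtained from $D$ by reversing the arcs of a spanning Eulerian subgraph $H$ of $D$; and reversing a single arc toggles exactly one contribution to $r$, so $r(D')\equiv r(D)+\size{H}\pmod 2$. Summing over all such $H$ therefore gives coefficient $(-1)^{r(D)}\sum_{H}(-1)^{\size{H}} = (-1)^{r(D)}(\mathrm{EE}-\mathrm{EO})$, which proves the claim. This sign bookkeeping is the one delicate point; the rest is formal.

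It remains to assemble the pieces. Since $G$ is $f$-AT, fix an Alon--Tarsi orientation $D$ with $d^+(v)\le f(v)-1$ for every $v$ and set $t_v \DefinedAs d^+(v)$. Then $\sum_v t_v = \card{E(G)} = \deg P_G$, and by the claim together with $\mathrm{EE}\ne\mathrm{EO}$ the coefficient of $\prod_v x_v^{t_v}$ in $P_G$ is nonzero. Given any list assignment $L$ with $\card{L(v)}=f(v)\ge t_v+1$ for all $v$, the Combinatorial Nullstellensatz supplies a choice $\varphi(v)\in L(v)$ with $P_G(\varphi)\ne 0$, i.e.\ a proper $L$-coloring of $G$; hence $G$ is $f$-choosable. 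I expect the entire argument to rest on the coefficient formula of the second paragraph — once it is established, invoking the Combinatorial Nullstellensatz is routine.
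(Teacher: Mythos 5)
Your proof is correct: it is the standard Alon--Tarsi argument, computing the coefficient of $\prod_v x_v^{d^+(v)}$ in the graph polynomial as $\pm(\mathrm{EE}-\mathrm{EO})$ via the bijection between orientations with a fixed outdegree sequence and spanning Eulerian subgraphs, and then applying the Combinatorial Nullstellensatz. The paper does not prove Theorem~B at all --- it cites Alon and Tarsi --- so there is nothing internal to compare against; your argument is exactly the proof from the cited source, with the sign bookkeeping (the one delicate point, as you note) handled correctly.
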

	
	In this paper we characterize those graphs $G$ with a specified vertex $x$
	that are not $f$-AT, where $f(x)=d(x)-1$ and $f(v)=d(v)$ for all other $v\in
	V(G)$.  All such graphs are formed from a few 2-connected building blocks, by
	repeatedly applying a small number of operations.  
	Most of the work in the proof is spent on the case when $G$ is 2-connected.
	This result is easy to state, so we include it a bit later in the introduction,
	as our \hyperlink{target:mainLemma}{Main Lemma}.
	Near the end of Section~\ref{MainThmSec}, with a little more work we extend our
	\hyperlink{target:mainLemma}{Main Lemma}, by removing the hypothesis of
	2-connectedness, to characterize
	all pairs $(G,h_x)$ that are not AT.  This result is
	Theorem~\ref{thm:1connected}.
	
	This line of research began with Gallai, who studied the minimum number of edges
	in an $n$-vertex $k$-critical graph $G$.  Since $G$ has minimum degree at least
	$k-1$, clearly $|E(G)|\ge\frac{k-1}2n$.  Gallai~\cite{gallai1963kritische} improved this
	bound by classifying all connected subgraphs that can be induced by vertices of
	degree $k-1$ in a $k$-critical graph.  By Theorem~A, all such graphs are Gallai
	trees.  Here, we consider graphs $G$ that are critical with respect to Alon--Tarsi
	orientation.  Specifically, $G$ is not $(k-1)$-AT, but every proper subgraph is;
	such graphs are \emph{$k$-AT-critical}.  The characterization of degree-AT
	graphs shows that, much like $k$-critical graphs, in a $k$-AT-critical graph
	$G$, every connected subgraph induced by vertices of degree $k-1$ must be a
	Gallai tree.  Our main result characterizes the subgraphs that can be induced by
	vertices of degree $k-1$, together with a single vertex of degree $k$.  Thus, it
	is natural to expect that this result will lead to improved lower bounds on
	the number of edges in $n$-vertex $k$-AT-critical graphs.
	
	Similar to that for
	degree-AT, our characterization remains unchanged in the contexts of
	list-coloring and paintability, as we show in Section~\ref{extensions}.  We
	see a sharp contrast when we consider
	graphs $G$ with two specified vertices $x_1$ and $x_2$ that are not $f$-AT,
	where $f(x_i)=d(x_i)-1$ for each $i\in \{1,2\}$ and $f(v)=d(v)$ for all other
	$v\in V(G)$.  For Alon--Tarsi orientations, we have more than 50 exceptional
	graphs on seven vertices or fewer.  Furthermore, the characterizations for
	list-coloring, paintability, and Alon--Tarsi orientations all differ.
	
	\bigskip
	
	We consider graphs with vertices labeled by natural numbers; that is, pairs
	$(G,h)$ where $G$ is a graph and $\func{h}{V(G)}{\IN}$.  We focus on the case
	when $h(x)=1$ for some $x$ and $h(v)=0$ for all other $v$; we denote this
	labeling as $h_x$. \aaside{$h_x$}{-.45cm}  We say that \emph{$(G, h)$ is
		AT}\aside{$(G,h)$\\ is AT} if $G$ is $(d_G - h)$-AT.
	When $H$ is an induced subgraph of $G$, we simplify notation by referring to
	the pair $(H, h)$ when we really mean $\parens{H, h\restriction_{V(H)}}$.
	
	Given a pair $(G,h)$ and a specified edge $e\in E(G)$, when we \emph{stretch
		$e$}\aside{stretch $e$}, we form $(G',h')$ from $(G,h)$ by subdividing $e$ twice and setting
	$h'(v_i)=0$ for each of the two new vertices, $v_1$ and $v_2$ (and $h'(v)=h(v)$
	for all other vertices $v$).  In Section~\ref{prelims}, we prove a
	\hyperlink{target:SubdivideTwice}{Stretching Lemma}, which shows that if $(G,h)$ is not
	AT and $e\in E(G)$, then stretching $e$ often yields another pair $(G',h')$
	that is also not AT.  Thus, stretching plays a key role in our main result.
	
	It is easy to check that the three pairs $(G,h)$ shown in
Figure~\ref{fig:seeds} are not AT
	(and we do this below, in Proposition~\ref{prop:easyD}).  Let $\D$ \aside{$\D$} be the
	collection of all pairs formed from the graphs in Figure~\ref{fig:seeds} by stretching each
	bold edge 0 or more times.  The \hyperlink{target:SubdivideTwice}{Stretching Lemma}
	implies that each pair in $\D$ is not AT.  Our
	\hyperlink{target:mainLemma}{Main Lemma} is that these are
	the only pairs $(G,h_x)$, where $G$ is 2-connected and neither complete 
	nor an odd cycle, such that $(G,h_x)$ is not AT, for some vertex $x\in V(G)$.  
	
	\begin{mainthm}
		\hypertarget{target:mainLemma}{}
		Let $G$ be 2-connected and let $x \in V(G)$.
		Now $(G,h_x)$ is AT if and only if
		\begin{enumerate}
			\item[(1)] $d(x)=2$ and $G-x$ is not a Gallai tree; or
			\item[(2)] $d(x)\ge 3$, $G$ is not complete, and $(G,h_x) \not \in \D$.
		\end{enumerate}
	\end{mainthm}
	
	The characterization of degree-choosable graphs has been applied
	to prove a variety of graph coloring
	results~\cite{BohmeMS, CranstonPTV, KostochkaS, KralS, Thomassen-surface}. 
	Likewise, we think our main results in this paper may be helpful in proving
	other results for Alon--Tarsi orientations, such as giving better lower bounds
	on the number of edges in $k$-AT-critical graphs.
	\bigskip
	
	\begin{figure}[bht]
\centering
\tikzstyle{_BoldEdgeStyle} = [line width=3]
\begin{tikzpicture}[scale = 11]
\tikzstyle{VertexStyle} = []
\tikzstyle{EdgeStyle} = []
\tikzstyle{labeledStyle}=[shape = circle, minimum size = 6pt, inner sep = 1.2pt, draw]
\tikzstyle{unlabeledStyle}=[shape = circle, minimum size = 6pt, inner sep = 1.2pt, draw, fill]
\Vertex[style = labeledStyle, x = 0.650, y = 0.550, L = \small {$0$}]{v0}
\Vertex[style = labeledStyle, x = 0.850, y = 0.700, L = \small {$0$}]{v1}
\Vertex[style = labeledStyle, x = 1.050, y = 0.550, L = \small {$0$}]{v2}
\Vertex[style = labeledStyle, x = 0.850, y = 0.950, L = \small {$1$}]{v3}
\Edge[label = \tiny {}, labelstyle={auto=right, fill=none}](v1)(v0)
\Edge[label = \tiny {}, labelstyle={auto=right, fill=none}](v1)(v2)
\Edge[style = _BoldEdgeStyle, label = \tiny {}, labelstyle={auto=right, fill=none}](v1)(v3)
\Edge[style = _BoldEdgeStyle, label = \tiny {}, labelstyle={auto=right, fill=none}](v3)(v0)
\Edge[style = _BoldEdgeStyle, label = \tiny {}, labelstyle={auto=right, fill=none}](v3)(v2)
\Edge[label = \tiny {}, labelstyle={auto=right, fill=none}](v2)(v0)
\end{tikzpicture}
\begin{tikzpicture}[scale = 11]
\tikzstyle{VertexStyle} = []
\tikzstyle{EdgeStyle} = []
\tikzstyle{labeledStyle}=[shape = circle, minimum size = 6pt, inner sep = 1.2pt, draw]
\tikzstyle{unlabeledStyle}=[shape = circle, minimum size = 6pt, inner sep = 1.2pt, draw, fill]
\Vertex[style = labeledStyle, x = 0.650, y = 0.550, L = \small {$0$}]{v0}
\Vertex[style = labeledStyle, x = 0.850, y = 0.700, L = \small {$0$}]{v1}
\Vertex[style = labeledStyle, x = 1.050, y = 0.550, L = \small {$0$}]{v2}
\Vertex[style = labeledStyle, x = 0.850, y = 0.950, L = \small {$1$}]{v3}
\Vertex[style = labeledStyle, x = 0.750, y = 0.750, L = \small {$0$}]{v4}
\Vertex[style = labeledStyle, x = 0.850, y = 0.800, L = \small {$0$}]{v5}
\Vertex[style = labeledStyle, x = 0.950, y = 0.750, L = \small {$0$}]{v6}
\Edge[label = \tiny {}, labelstyle={auto=right, fill=none}](v1)(v0)
\Edge[label = \tiny {}, labelstyle={auto=right, fill=none}](v1)(v2)
\Edge[label = \tiny {}, labelstyle={auto=right, fill=none}](v2)(v0)
\Edge[style = _BoldEdgeStyle, label = \tiny {}, labelstyle={auto=right, fill=none}](v5)(v3)
\Edge[style = _BoldEdgeStyle, label = \tiny {}, labelstyle={auto=right, fill=none}](v5)(v1)
\Edge[style = _BoldEdgeStyle, label = \tiny {}, labelstyle={auto=right, fill=none}](v4)(v3)
\Edge[style = _BoldEdgeStyle, label = \tiny {}, labelstyle={auto=right, fill=none}](v4)(v0)
\Edge[style = _BoldEdgeStyle, label = \tiny {}, labelstyle={auto=right, fill=none}](v6)(v3)
\Edge[style = _BoldEdgeStyle, label = \tiny {}, labelstyle={auto=right, fill=none}](v6)(v2)
\end{tikzpicture}
\begin{tikzpicture}[scale = 11]
\tikzstyle{VertexStyle} = []
\tikzstyle{EdgeStyle} = []
\tikzstyle{labeledStyle}=[shape = circle, minimum size = 6pt, inner sep = 1.2pt, draw]
\tikzstyle{unlabeledStyle}=[shape = circle, minimum size = 6pt, inner sep = 1.2pt, draw, fill]
\Vertex[style = labeledStyle, x = 1.000, y = 0.250, L = \small {$0$}]{v0}
\Vertex[style = labeledStyle, x = 1.200, y = 0.250, L = \small {$0$}]{v1}
\Vertex[style = labeledStyle, x = 0.850, y = 0.400, L = \small {$0$}]{v2}
\Vertex[style = labeledStyle, x = 1.000, y = 0.400, L = \small {$0$}]{v3}
\Vertex[style = labeledStyle, x = 1.350, y = 0.400, L = \small {$0$}]{v4}
\Vertex[style = labeledStyle, x = 1.200, y = 0.400, L = \small {$0$}]{v5}
\Vertex[style = labeledStyle, x = 1.100, y = 0.600, L = \small {$1$}]{v6}
\Edge[label = \small {}, labelstyle={auto=right, fill=none}](v0)(v2)
\Edge[label = \small {}, labelstyle={auto=right, fill=none}](v0)(v3)
\Edge[label = \small {}, labelstyle={auto=right, fill=none}](v1)(v0)
\Edge[label = \small {}, labelstyle={auto=right, fill=none}](v2)(v3)
\Edge[label = \small {}, labelstyle={auto=right, fill=none}](v4)(v1)
\Edge[label = \small {}, labelstyle={auto=right, fill=none}](v5)(v1)
\Edge[label = \small {}, labelstyle={auto=right, fill=none}](v5)(v4)
\Edge[label = \small {}, labelstyle={auto=right, fill=none}](v6)(v2)
\Edge[label = \small {}, labelstyle={auto=right, fill=none}](v6)(v3)
\Edge[label = \small {}, labelstyle={auto=right, fill=none}](v6)(v4)
\Edge[label = \small {}, labelstyle={auto=right, fill=none}](v6)(v5)
\end{tikzpicture}

\caption{Three pairs $(G,h_x)$ that are in $\D$.  In each case $x$ is labeled 1
and all other vertices are labeled 0.  Each other pair in $\D$ can be formed
from one of these pairs by repeatedly stretching one or more bold edges.}
\label{fig:seeds}
\end{figure}


	To conclude this section, we show that each pair in $\D$ is not AT.
	\begin{prop}
		If $(G,h_x)\in \D$, then $(G,h_x)$ is not AT.
		\label{prop:easyD}
	\end{prop}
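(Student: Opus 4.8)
The plan is to argue in two stages: first verify directly that each of the three pairs in Figure~\ref{fig:seeds} is not AT, and then propagate this to all of $\D$ using the \hyperlink{target:SubdivideTwice}{Stretching Lemma} of Section~\ref{prelims}.

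For the first pair, $G$ is $K_4$ and $x$ is any vertex (which one does not matter, since $K_4$ is vertex-transitive). As $K_4$ is complete, it is a Gallai tree, so by \cite{HKS} it is not degree-AT: it has no Alon--Tarsi orientation with $d^+(v)\le d(v)-1$ for every $v$. Since $d_G-h_x\le d_G$ pointwise, any orientation witnessing that $(G,h_x)$ is AT would in particular witness that $K_4$ is degree-AT, a contradiction; hence $(K_4,h_x)$ is not AT. (Equivalently, the graph polynomial $\prod_{\{u,v\}\in E}(x_u-x_v)$ of $K_4$ is, up to sign, the Vandermonde determinant, every monomial of which has four distinct exponents; but $(G,h_x)$-AT-ness requires a nonzero coefficient on some $\prod_v x_v^{a_v}$ with $a_x\le 1$, each $a_v\le 2$, and $\sum_v a_v=6$, and every such monomial has a repeated exponent.)

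For the second pair (a triangle joined to $x$ by three internally disjoint paths of length two) and the third pair, I would enumerate orientations by hand. Fix such a $G$ and an orientation $D$ with $d^-_D(v)\ge 1$ for all $v$ and $d^-_D(x)\ge 2$; the goal is to show $\mathrm{EE}(D)=\mathrm{EO}(D)$. The key observation is that any vertex which is a source or a sink of $D$ lies in no spanning Eulerian subgraph, and hence neither do its incident edges. Splitting into cases according to $d^-_D(x)$ (and, when $x$ has degree $4$, according to whether $x$ is a sink), this observation forces many edges to be absent from every spanning Eulerian subgraph, so that the spanning Eulerian subgraphs of $D$ reduce to those of a union of triangles together with some bridges — essentially the blocks of $G-x$ — plus, when $x$ is not a sink of $D$, those that pass through $x$. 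In each resulting case even and odd spanning Eulerian subgraphs pair off (for a union of cyclically oriented triangles, toggling one fixed triangle is such a pairing), giving $\mathrm{EE}(D)=\mathrm{EO}(D)$.

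For the second stage, recall that $\D$ is obtained from the three seed pairs by stretching bold edges any number of times (the third seed has no bold edges and so contributes only itself). By the \hyperlink{target:SubdivideTwice}{Stretching Lemma}, stretching an edge $e$ of a non-AT pair again yields a non-AT pair provided $e$ meets the lemma's hypothesis; it then remains only to check that each bold edge of each seed satisfies that hypothesis and that stretching one bold edge leaves the others eligible, after which a routine induction on the total number of stretches finishes the proof. I expect the main obstacle to lie entirely in the first stage — the finite case analysis for the second and especially the third seed, where $x$ has degree $4$ and several shapes of orientation arise, and one must make sure that none escapes with $\mathrm{EE}\ne\mathrm{EO}$. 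The source/sink-isolation observation, together with the fact that $G-x$ is in each case a union of triangles joined and decorated by bridges (which can lie in no Eulerian subgraph), is what should keep this manageable; pinning down the precise hypothesis of the Stretching Lemma and verifying it for the bold edges of Figure~\ref{fig:seeds} is a comparatively minor matter.
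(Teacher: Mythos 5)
Your overall strategy is legitimate but different from the paper's, and as written it has two genuine gaps. First, for the second and third seeds of Figure~\ref{fig:seeds} you defer the entire substance of the argument to an enumeration of orientations that you do not carry out. This is precisely the hard part. Your structural observations (sources and sinks lie in no spanning Eulerian subgraph; $G-x$ is triangles joined by bridges) do not by themselves dispose of the Moser spindle: there $x$ lies on two triangles $xv_2v_3$ and $xv_4v_5$ and on longer cycles through the bridge $v_0v_1$, so when $d^-_D(x)=2$ the spanning Eulerian subgraphs do \emph{not} reduce to those of $G-x$, and the claimed pairing of even and odd subgraphs is asserted, not exhibited. Second, your propagation step is understated. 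The \hyperlink{target:SubdivideTwice}{Stretching Lemma} (via Corollary~\ref{SubdivideConstructor}) transfers non-AT-ness across a stretch of $e$ only when \emph{both} $(G,h)$ and $(G-e,h)$ are non-AT, and this must be re-verified for every bold edge at every stage of the iteration (including edges internal to already-stretched paths, where deleting $e$ creates pendant paths). That verification is itself an induction needing, e.g., Lemma~\ref{DegreeTwoVertex}; calling it ``a comparatively minor matter'' leaves the second stage incomplete as well. Your first-seed argument (that $(K_4,h_x)$ is not AT because $K_4$ is a Gallai tree, hence not degree-AT, and $d_G-h_x\le d_G$) is correct and complete, as is the Vandermonde computation.

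For comparison, the paper avoids both difficulties by arguing through coloring rather than orientations: for each pair in $\D$ it exhibits a list assignment $L$ with $|L(x)|=d(x)-1$ and $|L(v)|=d(v)$ otherwise that admits no proper coloring, and concludes via the contrapositive of \hyperlink{target:thmB}{Theorem~B}. For the first two seeds and all their stretchings this is done uniformly in one stroke --- lists $\{1,2,3\}$ on the triangle and $\{1,2\}$ elsewhere, with the parity of the path lengths forbidding a color on the triangle --- so no induction on the number of stretches and no appeal to the Stretching Lemma is needed; for the Moser spindle one checks directly that it is not $3$-colorable. If you want to salvage your orientation-based route, you must actually complete the finite check for the two nontrivial seeds (the spindle case with $d^-_D(x)=2$ in particular) and set up the stretching induction with the auxiliary claim that $(G-e,h)$ remains non-AT for every bold edge $e$ at every stage.
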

	\begin{proof}
		For each pair $(G,h_x)\in \D$, we construct a list assignment $L$ such that
		$|L(x)|=d(x)-1$ and $|L(v)|=d(v)$ for all other $v\in V(G)$, but $G$ has no
		proper coloring from $L$.  Now $(G,h_x)$ is not AT, by the contrapositive of
		Theorem~B.
		
		Let $(G,h_x)$ be some stretching of the leftmost pair in
Figure~\ref{fig:seeds}. Assign the
		list $\{1,2,3\}$ to each of the vertices on the unbolded triangle and assign the
		list $\{1,2\}$ to each other vertex.  If $G$ has some coloring from these lists,
		then vertex $x$, labeled 1 in the figure, must get color 1 or 2; by symmetry,
		assume it is 1.  Along each path from $x$ to the triangle, colors must
		alternate $2, 1, \ldots$.  Each of the paths from $x$ to the triangle has odd
		length; thus, color 1 is forbidden from appearing on the triangle.  So $G$ has
		no coloring from $L$.  Now let $(G,h_x)$ be some stretching of the center pair
		in Figure~\ref{fig:seeds}.  The proof is identical to the first case, except that each path
		has even length, so if $x$ gets color 1, then color 2 is forbidden on the triangle.
		
		Finally, consider the rightmost pair in Figure~\ref{fig:seeds}.  Here $d(x)=4$ and $d(v)=3$
		for all other $v\in V(G)$.  Thus, it suffices to show that $G$ is not
		3-colorable.  Assume that $G$ has a 3-coloring and, by symmetry, assume that $x$
		is colored 1.  Now colors 2 and 3 must each appear on two neighbors of $x$.
		Thus, the two remaining vertices must be colored 1.  Since they are adjacent,
		this is a contradiction, which proves that $G$ is not 3-colorable.
	\end{proof}
	
	\section{Subgraphs, subdivisions, and cuts}
	\label{prelims}
	
	When \hladky, \kral, and Schauz characterized degree-AT graphs, their proof relied
	heavily on the observation that a connected graph $G$ is degree-AT if and only
	if $G$ has some induced subgraph $H$ such that $H$ is degree-AT.  Below, we
	reprove this easy lemma, and also extend it to our setting of pairs $(G,h_x)$.

	\begin{subgraph}
		\hypertarget{target:InducedSubgraph}{}
		Let $G$ be a connected graph and let $H$ be an induced subgraph of $G$.  If $H$
		is degree-AT, then also $G$ is degree-AT.  Similarly, if $x\in V(H)$ and
		$(H,h_x)$ is AT, then also $(G,h_x)$ is AT.  Further, if $x\notin V(H)$,
		$d_G(x)\ge 2$, and $(H,h_x)$ is AT, then $(G,h_x)$ is AT.  
	\end{subgraph}
	\begin{proof}
		Suppose that $H$ is degree-AT, and let $D'$ be an orientation of $H$ showing
		this.  Extend $D'$ to an orientation $D$ of $G$ by orienting all edges away from
		$H$, breaking ties arbitrarily, but consistently.  Now every directed cycle in
		$D$ is also a directed cycle in $D'$ (and vice versa), so $G$ is degree-AT.
		The proof of the second statement is identical.  The proof of the third
		statement is similar, but now if some edge $xy$ has endpoints equidistant from
		$H$, then $xy$ should be oriented into $x$.
	\end{proof}
	
	Recall that, given a pair $(G,h)$ and a specified edge $e\in E(G)$, when we
	\emph{stretch $e$}, we form $(G',h')$ from $(G,h)$ by subdividing $e$ twice
	and setting $h'(v_i)=0$ for each of the two new vertices, $v_1$ and $v_2$
	(and $h'(v)=h(v)$ for all other vertices $v$).  By repeatedly stretching edges,
	starting from the three pairs in Figure~\ref{fig:seeds}, we form all pairs
	$(G,h_x)$, where $G$ is 2-connected and $(G,h_x)$ is not AT.  
	The following lemma will be useful for proving this.
	
	\begin{stretching}\hypertarget{target:SubdivideTwice}{}
		Form $(G',h')$ from $(G,h)$ by stretching some edge $e\in E(G)$.
		%subdividing an edge $e$ of $G$ twice and
		%setting $h'(v_i)=0$ for each of the two new vertices, $v_1$ and $v_2$. 
		Now
		\begin{enumerate}
			\item[(1)] if $(G,h)$ is AT, then $(G', h')$ is AT; and
			\item[(2)] if $(G', h')$ is AT, then either $(G,h)$ is AT or $(G-e,h)$ is AT.
		\end{enumerate}	
	\end{stretching}
	\begin{proof}
		Suppose $e = u_1u_2$ and call the new vertices $v_1$ and $v_2$ so that $G'$ contains
		the induced path $u_1v_1v_2u_2$.  For (1), let $D$ be an orientation of $G$ showing
		that $(G,h)$ is AT. By symmetry we may assume $u_1u_2 \in E(D)$. Form an
		orientation $D'$ of $G'$ from $D$ by replacing $u_1u_2$ with the directed path
		$u_1v_1v_2u_2$.  We have a natural parity preserving bijection between the spanning
		Eulerian subgraphs of $D$ and $D'$, so we conclude that $(G', h')$ is AT.
		
		For (2), let $D'$ be an orientation of $G'$ showing that $(G',h')$ is AT. 
		Suppose $G'$ contains the directed path $u_1v_1v_2u_2$ or the directed path
		$u_2v_2v_1u_1$.  By symmetry, we can assume it is $u_1v_1v_2u_2$.  Now form an
		orientation $D$ of $G$ by replacing $u_1v_1v_2u_2$ with the directed edge
		$u_1u_2$.  As above, we have a parity preserving bijection between the spanning
		Eulerian subgraphs of $D$ and $D'$, so we conclude that $(G, h)$ is AT.  So
		suppose instead that $G'$ contains neither of the directed paths $u_1v_1v_2u_2$
		and $u_2v_2v_1u_1$.  Now no spanning Eulerian subgraph of $D'$ contains a cycle
		passing through $v_1$ and $v_2$.  So, the spanning Eulerian subgraph counts of
		$D'$ are the same as those of $D' - v_1 - v_2$.  However, this gives an
		orientation of $G-e$ showing that $(G-e, h)$ is AT.  
	\end{proof}
	
	Given a pair $(G,h)$ that is not AT, the \hyperlink{target:SubdivideTwice}{Stretching Lemma} suggests a way to construct a larger graph $G'$ such that $(G',h')$ is
	not AT.  In some cases, we can also use the
	\hyperlink{target:SubdivideTwice}{Stretching Lemma} to construct a smaller graph
	$\widehat{G}$ such that $(\widehat{G},h)$ is not AT.  
	Specifically, we have the following.
	
	\begin{cor}
		\label{SubdivideConstructor}
		\label{ReduceP4Cor}
		If $e$ is an edge in $G$ such that $(G,h)$ is not AT and $(G-e, h)$ is not AT,
		then stretching $e$ gives a pair $(G',h')$ that is not AT.  Further,
		let $G$ be a graph with an induced path $u_1v_1v_2u_2$ such that $d_G(v_1) =
		d_G(v_2) = 2$.  If $(G,h)$ is AT, where $h(v_1) = h(v_2) = 0$, and
		$(G-v_1-v_2,h)$ is not AT, then \[\parens{(G - v_1 - v_2) + u_1u_2,
			h\restriction_{V(G) \setminus \set{v_1, v_2}}} \text{ is AT.}\]
	\end{cor}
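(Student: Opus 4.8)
The plan is to derive both halves of the corollary directly from part~(2) of the \hyperlink{target:SubdivideTwice}{Stretching Lemma}; nothing new about orientations or Eulerian subgraphs is needed.

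For the first assertion I would simply take the contrapositive of Stretching Lemma~(2). That lemma states that if $(G',h')$ is AT, then $(G,h)$ is AT or $(G-e,h)$ is AT. Hence if neither $(G,h)$ nor $(G-e,h)$ is AT, then $(G',h')$ cannot be AT, which is exactly the claim. This step is immediate.

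For the second assertion the key observation is that $G$ is itself a stretching. Set $\widehat{G} = (G-v_1-v_2)+u_1u_2$ and $\widehat{h} = h\restriction_{V(G)\setminus\{v_1,v_2\}}$. Because $u_1v_1v_2u_2$ is an \emph{induced} path of $G$, the edge $u_1u_2$ is not present in $G-v_1-v_2$, so $\widehat{G}-u_1u_2 = G-v_1-v_2$. Moreover, because $d_G(v_1)=d_G(v_2)=2$, subdividing the edge $u_1u_2$ of $\widehat{G}$ twice recreates exactly the path $u_1v_1v_2u_2$, hence recreates $G$; and since $h(v_1)=h(v_2)=0$, the labeling produced by stretching agrees with $h$ on all of $V(G)$. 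In other words, $(G,h)$ is precisely the pair obtained from $(\widehat{G},\widehat{h})$ by stretching the edge $e=u_1u_2$. Now apply Stretching Lemma~(2) to $(\widehat{G},\widehat{h})$: since $(G,h)$ is AT, either $(\widehat{G},\widehat{h})$ is AT or $(\widehat{G}-e,\widehat{h})=(G-v_1-v_2,h)$ is AT. The latter is false by hypothesis, so $(\widehat{G},\widehat{h})$ is AT, as desired.

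The only point requiring care—the "main obstacle," such as it is—is the bookkeeping needed to verify that stretching $u_1u_2$ in $\widehat{G}$ genuinely reproduces the original pair $(G,h)$ on the nose, both as a graph and with the correct vertex labels; this is where the hypotheses that $u_1v_1v_2u_2$ is induced, that $v_1$ and $v_2$ have degree $2$ in $G$, and that $h(v_1)=h(v_2)=0$ are each used. Once that identification is in place, both parts of the corollary are one-line consequences of the Stretching Lemma.
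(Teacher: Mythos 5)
Your proposal is correct and follows the paper's own argument exactly: both parts are read off from part~(2) of the \hyperlink{target:SubdivideTwice}{Stretching Lemma}, the first by contraposition and the second by recognizing $(G,h)$ as the stretch of $\parens{(G-v_1-v_2)+u_1u_2,\ h\restriction_{V(G)\setminus\{v_1,v_2\}}}$ along $u_1u_2$ and eliminating the alternative $(G-v_1-v_2,h)$ by hypothesis. The explicit bookkeeping you supply (that the induced-path, degree-$2$, and $h=0$ conditions make the identification exact) is only implicit in the paper, but it is the right justification.
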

	\begin{proof}
		The first statement is immediate from the \hyperlink{target:SubdivideTwice}{Stretching Lemma}.  Now we prove the second.  Suppose $(G,h)$ satisfies the
		hypotheses.  Applying part (2) of the \hyperlink{target:SubdivideTwice}{Stretching Lemma}
		shows that either $\parens{G - v_1 - v_2, h\restriction_{V(G) \setminus
				\set{v_1, v_2}}}$ is AT or $\parens{(G - v_1 - v_2) + u_1u_2,
			h\restriction_{V(G) \setminus \set{v_1, v_2}}}$ is AT. 
		By hypothesis, the former is false.  Thus, the latter is true.
	\end{proof}
	
	With standard vertex coloring, we can easily reduce to the case where $G$ is
	2-connected.  If $G$ is a connected graph with two blocks, $B_1$ and $B_2$,
	meeting at a cutvertex $x$, then we can color each of $B_1$ and $B_2$
	independently, and afterward we can permute colorings to match at $x$.
	For Alon--Tarsi orientations, the situation is not quite as simple.  However,
	the following lemma plays a similar role for us.
	
	\begin{lem}\label{CutvertexPatch}
		Let $A_1, A_2 \subseteq V(G)$, and $x\in V(G)$ be such that $A_1\cup A_2=V(G)$ and $A_1 \cap
		A_2 = \set{x}$.  If $G[A_i]$ is $f_i$-AT for each $i \in \{1,2\}$, then $G$ is
		$f$-AT, where $f(v) = f_i(v)$ for each $v \in V(A_i-x)$ and $f(x) = f_1(x) + f_2(x)
		- 1$.  Going the other direction, if $G$ is $f$-AT, then $G[A_i]$ is $f_i$-AT
		for each $i \in \{1,2\}$, where $f_i(v) = f(v)$ for each $v \in V(A_i-x)$ and
		$f_1(x) + f_2(x) \le f(x) + 1$.
	\end{lem}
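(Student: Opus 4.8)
The plan is to prove both directions by manipulating Alon--Tarsi orientations directly, using the fact that spanning Eulerian subgraphs decompose across the cutvertex $x$. The key structural observation is that since $A_1 \cap A_2 = \{x\}$, every directed cycle of any orientation $D$ of $G$ lies entirely within $G[A_1]$ or entirely within $G[A_2]$; hence every spanning Eulerian subgraph $H$ of $D$ factors uniquely as $H = H_1 \cup H_2$, where $H_i$ is a spanning Eulerian subgraph of $D[A_i]$. Writing $\mathrm{EE}_i$ and $\mathrm{EO}_i$ for the even/odd counts of $D[A_i]$ (noting each $D[A_i]$ contains no edges outside $A_i$, so we may think of it as an orientation of $G[A_i]$), a parity-and-counting argument gives $\mathrm{EE}(D) - \mathrm{EO}(D) = (\mathrm{EE}_1 - \mathrm{EO}_1)(\mathrm{EE}_2 - \mathrm{EO}_2)$, since the parity of $H$ is the sum of the parities of $H_1$ and $H_2$. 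So $D$ is Alon--Tarsi if and only if each $D[A_i]$ is Alon--Tarsi.

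For the forward direction, suppose $D_i$ is an Alon--Tarsi orientation of $G[A_i]$ with $d_{D_i}^+(v) \le f_i(v) - 1$ for all $v \in A_i$. The only subtlety is that $x$ lies in both pieces, so I would glue by taking $D = D_1 \cup D_2$ on $G$ (the edge sets are disjoint since $A_1 \cap A_2 = \{x\}$ means no edge has both endpoints shared). Then $d_D^+(x) = d_{D_1}^+(x) + d_{D_2}^+(x) \le (f_1(x)-1) + (f_2(x)-1) = (f_1(x)+f_2(x)-1) - 1 = f(x) - 1$, and for $v \ne x$ the out-degree constraint is inherited from the appropriate piece. By the factorization above, $D$ is Alon--Tarsi, so $G$ is $f$-AT.

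For the reverse direction, suppose $D$ is an Alon--Tarsi orientation of $G$ with $d_D^+(v) \le f(v) - 1$ for all $v$. Set $D_i = D[A_i]$; by the factorization, each $D_i$ is Alon--Tarsi. Define $f_i(v) = d_D^+(v) + 1$ for $v \in A_i - x$, which certainly satisfies $f_i(v) \le f(v)$, and is what we want (the statement asks for $f_i(v) = f(v)$, so I should instead just set $f_i(v) = f(v)$ and note $d_{D_i}^+(v) = d_D^+(v) \le f(v)-1$ directly). For the cutvertex, set $f_i(x) = d_{D_i}^+(x) + 1$; then $f_1(x) + f_2(x) = d_{D_1}^+(x) + d_{D_2}^+(x) + 2 = d_D^+(x) + 2 \le (f(x)-1) + 2 = f(x)+1$, as required, and $d_{D_i}^+(x) = f_i(x) - 1$ shows $G[A_i]$ is $f_i$-AT.

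The only real obstacle is pinning down the parity bookkeeping in the factorization identity $\mathrm{EE}(D) - \mathrm{EO}(D) = (\mathrm{EE}_1 - \mathrm{EO}_1)(\mathrm{EE}_2 - \mathrm{EO}_2)$ cleanly: one must check that a subgraph $H$ of $D$ with every vertex having indegree equal to outdegree splits into such subgraphs on each side (this uses that $x$'s edges split between the two sides and the flow-conservation at $x$ holds separately because edges don't cross), and that $e(H) = e(H_1) + e(H_2)$ so parities add. Everything else is routine degree arithmetic. I would state the factorization as a short sub-claim at the start of the proof and then apply it in both directions.
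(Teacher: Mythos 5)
Your proposal is correct and follows essentially the same route as the paper: the product identity $EE(D)-EO(D) = (EE(D_1)-EO(D_1))(EE(D_2)-EO(D_2))$ coming from the fact that no directed cycle meets both $A_1-x$ and $A_2-x$, combined with the observation that the degree of $x$ splits additively between the two sides. Your version is in fact slightly more careful on two points the paper glosses over — verifying that the restriction of a spanning Eulerian subgraph to each $A_i$ is again Eulerian at $x$, and phrasing the degree arithmetic in terms of outdegree (matching the definition $d^+(v)\le f(v)-1$) rather than the paper's "indegree."
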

	\begin{proof}
		We begin with the first statement.
		For each $i \in \{1,2\}$, choose an orientation $D_i$ of $A_i$ showing that $A_i$
		is $f_i$-AT.  Together these $D_i$ give an orientation $D$ of $G$. Since no cycle
		has vertices in both $A_1-x$ and $A_2-x$, we have
		\begin{align*}
			EE(D) - EO(D) &= EE(D_1)EE(D_2) + EO(D_1)EO(D_2) - EE(D_1)EO(D_2) - EO(D_1)EE(D_2) \\
			&= (EE(D_1) - EO(D_1))(EE(D_2) - EO(D_2)) \\
			&\ne 0.
		\end{align*}
		Hence $G$ is $f$-AT.
		
		Now we prove the second statement.  Suppose that $G$ is $f$-AT and choose an
		orientation $D$ of $G$ showing this. 
		Let $D_i = D[A_i]$ for each $i \in \{1,2\}$.  As above, we have $0 \ne
		EE(D) - EO(D) = (EE(D_1) - EO(D_1))(EE(D_2) - EO(D_2))$. Hence, $EE(D_1) -
		EO(D_1) \ne 0$ and $EE(D_2) - EO(D_2) \ne 0$.  Since the indegree of $x$ in
		$D$ is the sum of the indegree of $x$ in $D_1$ and the indegree of $x$ in
		$D_2$, the lemma follows.
	\end{proof}
	
	\section{Degree-AT graphs and an Extension Lemma}
	%A graph $G$ is called \emph{degree-AT} if $(G,h)$ is $AT$ where $h(v)=0$ for all
	%$v\in V(G)$.
	Recall that our \hyperlink{target:mainLemma}{Main Lemma} relies on a
	characterization of degree-AT graphs. 
	As we mentioned in the introduction, a description of degree-choosable graphs
	was first given by Borodin~\cite{borodin1977criterion} and \erdos, Rubin, and
	Taylor~\cite{ERT}.  \hladky, \kral, and Schauz~\cite{HKS} later extended the
	proof from~\cite{ERT} to Alon--Tarsi orientations.  This proof relies on
	Rubin's Block lemma, which states that every 2-connected graph $G$ contains an
	induced even cycle with at most one chord, unless $G$ is complete or an
	odd cycle.  For variety, and completeness, we include a new proof; it extends
	ideas of Kostochka, Stiebitz, and Wirth~\cite{KSW} from list-coloring to
	Alon--Tarsi orientations.  For this proof we need the following very special
	case of a key lemma in \cite{OreVizing}.  When vertices $x$ and $y$ are
	adjacent, we write $x\adj y$; otherwise $x\nonadj y$.
	
	\begin{lem}\label{GeneralEulerLemma}
		Let $G$ be a graph and $x \in V(G)$ such that $H$ is connected, where $H \DefinedAs G-x$. 
		If there exist $z_1, z_2 \in V(H)$ with $N_H[z_1] = N_H[z_2]$ such that $x \adj
		z_1$ and $x \nonadj z_2$, then $G$ is $f$-AT where $f(x) = 2$ and $f(v) =
		d_G(v)$ for all $v \in V(H)$.
	\end{lem}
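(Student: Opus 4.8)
The plan is to exhibit an explicit Alon--Tarsi orientation $D$ of $G$ in which $x$ has outdegree at most $1$ and every $v\in V(H)$ has outdegree at most $d_G(v)-1$, and then verify the AT condition $EE(D)\neq EO(D)$ by a sign-reversing involution argument. The key structural feature to exploit is the hypothesis $N_H[z_1]=N_H[z_2]$: since $z_1$ and $z_2$ have identical closed neighborhoods in $H$, they are adjacent in $H$, and the map swapping $z_1$ with $z_2$ is an automorphism of $H$. Moreover $x$ sees $z_1$ but not $z_2$, so in $G$ the two vertices $z_1,z_2$ differ only in that $z_1$ is joined to $x$. I would orient the edge $xz_1$ toward $x$, which already uses up the budget $f(x)=2$ on the in-side if I also give $x$ one more in-edge; more precisely I want $d^+_D(x)\le 1$, so I should orient all but at most one edge at $x$ into $x$.

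First I would build an orientation of $H$ witnessing that $H$ is degree-AT. Since $H$ is connected, either $H$ is not a Gallai tree and we may invoke the \hladky--\kral--Schauz characterization (or Theorem~A in the choosability setting, then Theorem~B) to get a degree-AT orientation $D_H$ of $H$; or $H$ is a Gallai tree, which is the genuinely delicate case. But note that the existence of $z_1,z_2$ with $N_H[z_1]=N_H[z_2]$ forces a multiple edge / twin structure that a Gallai-tree block cannot have unless that block is $K_2$ or a larger complete graph — and if $z_1z_2$ lies in a $K_r$ block with $r\ge 3$, orienting that block cyclically already gives slack. So I would split into cases by the block of $H$ containing the edge $z_1z_2$, and in each case produce $D_H$ with $d^+_{D_H}(v)\le d_H(v)-1$ for all $v$, and additionally with the freedom to choose the orientation of $z_1z_2$ (both directions should be achievable, since $z_1\leftrightarrow z_2$ is an automorphism, so reversing that one edge and relabeling gives another valid orientation).

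Next, extend $D_H$ to $D$ on $G$ by orienting the edges incident to $x$. Orient $xz_1$ into $x$. Pick any other neighbor $y$ of $x$ in $G$ and orient $xy$ out of $x$; orient every remaining edge at $x$ into $x$. Then $d^+_D(x)\le 1\le f(x)-1$. For $v\in V(H)$, the only way $d^+_D(v)$ can exceed $d_{D_H}^+(v)$ is if $vx\in E(G)$ is oriented out of $v$, which happens only for $v=y$; but $d^+_D(y)=d^+_{D_H}(y)+1\le (d_H(y)-1)+1 = d_H(y)\le d_G(y)-1$ provided $y\neq$ (something forcing equality) — here I should choose $y$ to be a vertex of $H$ with $d^+_{D_H}(y)\le d_H(y)-2$, i.e.\ with slack, which exists because a degree-AT orientation of a connected non-complete-non-odd-cycle graph can always be arranged to have slack somewhere; alternatively, and more robustly, choose $y=z_2$ if $z_2\in N_G(x)$—but $x\nonadj z_2$, so instead I orient the edge $z_1z_2$ into $z_2$ and out of $z_1$, freeing a unit of out-budget at $z_1$ to absorb the orientation of $xz_1$ if needed; since $z_1$ already receives $xz_1$ into it this is automatic, and the only real constraint is at the unique out-neighbor $y$ of $x$, where I use the slack obtained by routing $z_1z_2$ appropriately.

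Finally I would check $EE(D)\neq EO(D)$. Because $z_1$ and $z_2$ have the same neighbors in $H$ and $z_2$ is not adjacent to $x$, any spanning Eulerian subgraph of $D$ using the edge $xz_1$ can be locally modified at the pair $\{z_1,z_2\}$; the standard argument pairs up Eulerian subgraphs via the involution that, at $z_1$ and $z_2$, swaps which of the two vertices is "used" by rerouting a short $2$-path through the common neighbor. This involution changes the edge count by $\pm1$, hence reverses parity, and its fixed points are exactly the Eulerian subgraphs avoiding the relevant gadget; on those one can recurse or appeal directly to $D_H$ being AT. I expect the main obstacle to be precisely this parity bookkeeping together with the Gallai-tree case of constructing $D_H$ with the right slack at $z_1z_2$: getting both directions of $z_1z_2$ simultaneously compatible with the out-degree bound at $x$'s chosen out-neighbor is where the argument is most fiddly, and is where the cited lemma from \cite{OreVizing} (which gives exactly the needed twin-vertex Eulerian count) does the heavy lifting.
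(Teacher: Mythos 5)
There is a genuine gap. Your construction starts by asking for a degree-AT orientation $D_H$ of $H$ (or at least an orientation with ``slack'' obtained from the AT property of $H$), and your final parity check explicitly falls back on ``$D_H$ being AT'' to handle the fixed points of your involution. But $H$ need not be degree-AT: the hypothesis $N_H[z_1]=N_H[z_2]$ is most naturally satisfied when $z_1,z_2$ lie in a complete block --- for instance $H=K_r$ --- and complete graphs (and Gallai trees generally) are exactly the connected graphs that are \emph{not} degree-AT. The whole content of the lemma is that the asymmetric attachment of $x$ ($x\adj z_1$, $x\nonadj z_2$) creates the Alon--Tarsi property where $H$ alone does not have it, so no argument that routes through an AT orientation of $H$ can close this case. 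Your remark that ``orienting that block cyclically already gives slack'' addresses the outdegree bounds but not the Eulerian-count condition, which is the hard part. Deferring to the cited lemma of \cite{OreVizing} is also not available, since the statement being proved \emph{is} (a special case of) that lemma.

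The missing idea is to make the orientation of $H$ \emph{acyclic} with $z_1$ as the unique source: order $V(H)$ as $z_1,z_2,\dots$ so that every vertex after $z_1$ has an earlier neighbor (possible because $H$ is connected and $z_1\adj z_2$), orient $H$-edges from earlier to later, orient $xz_1$ \emph{into $z_1$} (not into $x$, as you propose --- the direction matters), and orient all other edges at $x$ into $x$. Acyclicity of $D-x$ forces every nonempty spanning Eulerian subgraph to use $xz_1$, so $z_1$ has indegree and outdegree exactly $1$ in each such subgraph; the twin condition $N_H[z_1]=N_H[z_2]$ then lets you swap the outgoing path $z_1z_2w$ with the edge $z_1w$ (and vice versa), a fixed-point-free, parity-reversing involution on all nonempty Eulerian subgraphs. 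Only the empty subgraph is unpaired, giving $EE=EO+1$. The degree bounds come for free: every $v\ne z_1$ in $H$ has an in-edge from its earlier neighbor, $z_1$ has the in-edge $xz_1$, and $x$ has outdegree $1=f(x)-1$. Your version leaves both the existence of a suitable $D_H$ and the fixed-point bookkeeping unestablished, and these are precisely where the proof lives.
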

	\begin{proof}
		Order the vertices of $H$ with $z_1$ first and $z_2$ second so that every
		vertex, other than $z_1$, has at least one neighbor preceding it. 
		Orient each edge of $H$ from its earlier endpoint toward its later endpoint. 
		Orient $xz_1$ into $z_1$ and orient all other
		edges incident to $x$ into $x$.  Let $D$ be the resulting orientation. 
		Clearly, $d_{D}^+(v) \le f(v) - 1$ for all $v \in V(D)$.  So, we just need to
		check that $EE(D) \ne EO(D)$.  
		
		Since $xz_1$ is the only edge of $D$ leaving
		$x$, and $D-x$ is acyclic, every spanning Eulerian subgraph of $D$ that has
		edges must have edge $xz_1$.  
		Consider an Eulerian subgraph $A$ of $D$ containing $xz_1$. Since $z_1$ %must
		has indegree $1$ in $A$, it must also have outdegree $1$ in $A$.  We show
		that $A$ has a mate $A'$ of opposite parity.  
		If $z_2 \in A$ then $z_1z_2w \in A$, for some $w$, so we form
		$A'$ from $A$ by removing $z_1z_2w$ and adding $z_1w$. 
		If instead $z_1z_2\notin A$, then $z_2 \not \in A$ and $z_1w \in
		A$ for some $w \in N_H[z_1]-z_2$, so we form $A'$ from $A$ by removing $z_1w$ and
		adding $z_1z_2w$.  
		Hence exactly half of the Eulerian subgraphs of $D$ that contain edges are
		even.  Since the edgeless spanning subgraph of $D$ is an even Eulerian
		subgraph, we conclude that $EE(D) = EO(D) + 1$.  Hence $G$ is $f$-AT.
	\end{proof}
	
	We use the previous lemma to give a new proof of the characterization of
	degree-AT graphs.
	\begin{lem}
		\label{DegreeATClassification}
		A connected graph $G$ is degree-AT if and only if it is not a Gallai tree.
	\end{lem}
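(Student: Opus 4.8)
The plan is to prove both directions. For the easy direction, suppose $G$ is a Gallai tree; we show $G$ is not degree-AT. In any orientation $D$ with $d_D^+(v)\le d(v)-1$ for all $v$, every vertex has indegree at least $1$, so $D$ restricted to any block $B$ has every vertex with positive indegree in $D$, hence (summing over the block) the total indegree exceeds the edge count unless it is balanced right at the leaves; a standard counting argument shows that a Gallai tree forces $D$ to have a very rigid structure, block by block. Concretely, I would argue that if $B$ is a complete graph $K_n$ oriented so that every vertex has indegree $\ge 1$ then $B$ contains a directed cycle, and in fact one can pair up spanning Eulerian subgraphs to get $EE(D)=EO(D)$; and similarly when $B$ is an odd cycle, the only orientations with all indegrees $\ge 1$ are the two directed Hamilton orientations, for which again $EE=EO$. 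Then I would combine blocks using Lemma~\ref{CutvertexPatch}: since the block-tree structure forces each block's orientation to satisfy $EE=EO$, the product formula gives $EE(D)=EO(D)$ for all of $G$, so $G$ is not degree-AT.

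For the hard direction, suppose $G$ is connected and not a Gallai tree; we must produce a degree-AT orientation. By the \hyperlink{target:InducedSubgraph}{Subgraph Lemma} it suffices to find \emph{some} induced subgraph $H$ of $G$ that is degree-AT, so I may pass to a minimal non-Gallai-tree induced subgraph and thereby assume $G$ itself is vertex-minimal non-Gallai-tree. Such a graph is $2$-connected (a cutvertex would split $G$ into two pieces, at least one of which is a smaller non-Gallai-tree induced subgraph, unless both pieces are Gallai trees, in which case $G$ is one too), and it is neither complete nor an odd cycle. Now I would apply the approach of Kostochka--Stiebitz--Wirth, as the excerpt advertises: in a $2$-connected graph that is not complete and not an odd cycle, one can find two vertices $z_1,z_2$ with $N[z_1]=N[z_2]$ after possibly contracting along a path, or more directly find an induced structure to which Lemma~\ref{GeneralEulerLemma} applies. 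The cleanest route: show that a vertex-minimal non-Gallai-tree graph contains a vertex $x$ such that $G-x$ is connected and contains twin-like vertices $z_1,z_2$ with $N_{G-x}[z_1]=N_{G-x}[z_2]$, $x\adj z_1$, $x\nonadj z_2$; then Lemma~\ref{GeneralEulerLemma} gives that $G$ is $f$-AT with $f(x)=2\le d(x)$ and $f(v)=d(v)$ elsewhere, hence degree-AT.

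The main obstacle is producing the configuration needed for Lemma~\ref{GeneralEulerLemma} inside an arbitrary vertex-minimal non-Gallai-tree graph. The lemma as stated needs genuine closed-neighborhood twins, which a general $2$-connected graph need not have. I expect to need an intermediate reduction: either invoke Rubin's Block Lemma (an induced even cycle with at most one chord) and handle that configuration directly by an explicit orientation and Eulerian-subgraph pairing, or set up an "Extension Lemma" (foreshadowed by the section title) that lets me build a degree-AT orientation along a path of degree-$2$ vertices from a smaller degree-AT witness, thereby reducing to the case where Lemma~\ref{GeneralEulerLemma} literally applies. I would try the Block-Lemma route first, since it is concrete: for an induced even cycle $C$ with at most one chord, orient $C$ as two directed paths meeting the structure appropriately, orient everything else away from $C$ with ties broken consistently, and check by a parity-reversing involution on spanning Eulerian subgraphs that $EE\ne EO$ while all indegrees are at least $1$. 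Verifying the involution in the chord and no-chord subcases, and confirming the indegree bound at the attachment vertices, is the routine-but-delicate heart of the argument.
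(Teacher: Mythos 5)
Your reduction for the hard direction is the right opening move and matches the paper's: a vertex-minimal connected non-Gallai-tree is $2$-connected, neither complete nor an odd cycle, and by the \hyperlink{target:InducedSubgraph}{Subgraph Lemma} it suffices to handle that case via Lemma~\ref{GeneralEulerLemma}. But you stop exactly where the real work begins, and you explicitly say so: you do not produce the closed-neighborhood twins that Lemma~\ref{GeneralEulerLemma} needs, and you offer only two unverified fallbacks (Rubin's Block Lemma, or an unspecified ``Extension Lemma''). The idea you are missing is that your own minimality already hands you the twins: for any non-cutvertex $v$ (in particular a minimum-degree one), $G-v$ is a connected proper induced subgraph, hence a Gallai tree, and \emph{two noncutvertices $z_1,z_2$ of a complete block $B$ of a Gallai tree satisfy $N_{G-v}[z_1]=N_{G-v}[z_2]=V(B)$}. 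So either $v$ is adjacent to one noncutvertex of some complete block but not another --- and Lemma~\ref{GeneralEulerLemma} finishes --- or $v$ is adjacent to \emph{every} noncutvertex of \emph{every} complete endblock of $G-v$, and that residual case is closed by degree counting ($d(v)$ minimal forces the endblocks to be small, eventually forcing $G$ complete) together with the \hyperlink{target:SubdivideTwice}{Stretching Lemma} to un-subdivide long odd-cycle endblocks. Without this (or a fully executed Block-Lemma argument, which is the \hladky--\kral--Schauz route the paper deliberately avoids), the proof of the main direction is not there. Note also that if you do fall back on un-stretching, your minimality is over \emph{induced subgraphs} of the original graph, and the un-stretched graph $(G-v_1-v_2)+u_1u_2$ is not one, so you would need to set up the minimal counterexample the way the paper does (minimal among graphs that are neither Gallai trees nor degree-AT) rather than as a minimal non-Gallai-tree induced subgraph.

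The easy direction also has a soft spot. You propose to show directly that every orientation of $K_n$ with all indegrees at least $1$ satisfies $EE=EO$ by ``pairing up spanning Eulerian subgraphs''; for odd cycles this is a two-line check, but for $K_n$ ($n\ge 4$) there is no obvious involution and you give none. The paper sidesteps this entirely: complete graphs and odd cycles are not degree-choosable, so by the contrapositive of \hyperlink{target:thmB}{Theorem~B} they are not degree-AT, and Lemma~\ref{CutvertexPatch} then propagates this through the block tree by induction. Your use of Lemma~\ref{CutvertexPatch} to combine blocks is the same as the paper's and is fine; replace the per-block orientation analysis with the appeal to Theorem~B and that direction is complete.
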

	\begin{proof}
		We begin with the ``only if'' direction.  Neither odd cycles nor complete graphs
		are degree-choosable.  Thus, by \hyperlink{target:thmB}{Theorem B}, they are not
		degree-AT.  By induction on the number of blocks, Lemma~\ref{CutvertexPatch}
		implies that no Gallai tree is degree-AT.
		
		Now, the ``if'' direction.
		Suppose there exists a connected graph that is not a Gallai tree, but is also not
		degree-AT.  Let $G$ be such a graph with as few vertices as possible.
		Since $G$ is not degree-AT, no induced subgraph $H$ of $G$ is
		degree-AT by the \hyperlink{target:InducedSubgraph}{Subgraph Lemma}. 
		Hence, for any $v \in V(G)$ that is not a cutvertex, $G-v$ must be a Gallai
		tree by minimality of $|G|$.  
		
		If $G$ has more than one block, then for endblocks $B_1$ and $B_2$, choose
		noncutvertices $w\in B_1$ and $x\in B_2$.  By the minimality of $|G|$, both
		$G-w$ and $G-x$ are Gallai trees.  Since every block of $G$ appears either as a
		block of $G-w$ or as a block of $G-x$, every block of $G$ is either complete or
		an odd cycle.  Hence, $G$ is a Gallai tree, a contradiction.  So instead $G$
		has only one block, that is, $G$ is $2$-connected.  Further, $G-v$ is a Gallai
		tree for all $v \in V(G)$.
		
		Let $v$ be a vertex of minimum degree in $G$.  Since $G$ is $2$-connected,
		$d_G(v) \ge 2$ and $v$ is adjacent to a noncutvertex in every endblock of $G-v$.
		If $G-v$ has a complete block $B$ with noncutvertices $x_1,x_2$ where $v \adj
		x_1$ and $v \nonadj x_2$, then we can apply Lemma \ref{GeneralEulerLemma} 
		%with $Y = \set{v}$ and $F = vx_1$ 
		to conclude that $G$ is degree-AT, a
		contradiction.  So, $v$ must be adjacent to every noncutvertex in every
		complete endblock of $G-v$.
		
		Suppose $d_G(v) \ge 3$.  Now no endblock of $G-v$ can be an odd cycle of
		length at least $5$ ($G$ would have vertices of degree $3$ and also $d_G(v) \ge
		4$, contradicting the minimality of $d_G(v)$).  Let $B$ be a smallest complete
		endblock of $G-v$.  Now for a noncutvertex $x \in V(B)$, we have $d_G(x) =
		|B|$ and hence $d_G(v) \le |B|$. 
		If $G-v$ has at least two endblocks, then $2(|B|-1) \le |B|$, so $d_G(v)
		\le |B| = 2$, a contradiction.  Hence, $G-v = B$ and $v$ is joined to $B$, so
		$G$ is complete, which is a contradiction.
		
		Thus, we have $d_G(v) = 2$.  Suppose $G-v$ has at least two endblocks. 
		Now it has exactly two and $v$ is adjacent to one noncutvertex in each. 
		Neither of the endblocks can be odd cycles of length at least five, since then
		we can get a smaller counterexample by the \hyperlink{target:SubdivideTwice}{Stretching Lemma}.  Since $v$ is adjacent to every noncutvertex in every
		complete endblock of $G-v$, both endblocks must be $K_2$.  But now either
		$G=C_4$ (which is degree-AT, by orienting the cycle consistently) or we can get
		a smaller counterexample by the \hyperlink{target:SubdivideTwice}{Stretching Lemma}. 
		So, $G-v$ must be $2$-connected. Since $G-v$ is a Gallai tree, it is either
		complete or an odd cycle.  If $G-v$ is not complete, then we can get a smaller
		counterexample by the \hyperlink{target:SubdivideTwice}{Stretching Lemma}.  So, $G-v$
		is complete and $v$ is adjacent to every noncutvertex of $G-v$; that is, $G$ is
		complete, a contradiction.
	\end{proof}
	
	\section{When h is 1 for one vertex}
	\label{MainThmSec}
	
	In this section, we prove our \hyperlink{target:mainLemma}{Main Lemma}.
	For a graph $G$ and $x \in V(G)$ recall that
	$\func{h_x}{V(G)}{\IN}$ is defined as $h_x(x) = 1$ and $h_x(v) = 0$ for all $v
	\in V(G-x)$. We classify the connected graphs $G$ such that $(G,h_x)$ is AT for
	some $x \in V(G)$.  We begin with the case when $G$ is 2-connected, which takes
	most of the work.  At the end of the section, we extend our characterization to
	all connected graphs.
	
	We will show that for most 2-connected graphs $G$ and vertices $x\in V(G)$, the
	pair $(G,h_x)$ is AT.  Specifically, this is true for all pairs except those in
	$\D$, defined in the introduction.  In view of the
	\hyperlink{target:InducedSubgraph}{Subgraph Lemma}, for a 2-connected graph $G$ and
	$x\in V(G)$, to show that $(G,h_x)$ is AT it suffices to
	find some induced subgraph $H$ such that $(H,h_x)$ is AT. 
	The subgraphs $H$ that we consider all have $d_H(x)=0$ or $d_H(x)\ge 3$.  This
	motivates the next lemma, which allows us to reduce to the case $d_G(x)\ge 3$.
	
	\begin{lem}
		\label{DegreeTwoVertex}
		If $G$ is a connected graph and $x \in V(G)$ with $d_G(x) = 2$, then $(G,h_x)$
		is AT if and only if $G-x$ is degree-AT.
	\end{lem}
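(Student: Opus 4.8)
The plan is to unwind the definitions. Since $d_G(x)=2$ and $h_x(x)=1$, the function $f=d_G-h_x$ has $f(x)=1$ and $f(v)=d_G(v)$ for all $v\in V(G-x)$, so $(G,h_x)$ is AT exactly when $G$ has an Alon--Tarsi orientation $D$ with $d^+_D(x)\le 0$ and $d^-_D(v)\ge 1$ for every $v\in V(G-x)$; equivalently, $x$ has both of its incident edges oriented toward it and every other vertex has indegree at least $1$. Write $u_1,u_2$ for the neighbors of $x$ (not necessarily distinct, if $G$ has parallel edges). The degree-AT condition on $G-x$ is that $G-x$ has an Alon--Tarsi orientation with all indegrees at least $1$; note this does not require $G-x$ to be connected, and the spanning Eulerian subgraph counts of a disconnected orientation split as a product over components just as in Lemma~\ref{CutvertexPatch}.

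The key observation is this: if an orientation $D$ of $G$ has $d^+_D(x)=0$, then $x$ has outdegree $0$, hence also indegree $0$, in every spanning Eulerian subgraph of $D$, so $x$ is isolated there. Therefore $D$ and $D-x$ have exactly the same spanning Eulerian subgraphs (up to deleting the isolated vertex $x$), with no change in edge counts, so $EE(D)=EE(D-x)$ and $EO(D)=EO(D-x)$. In particular $D$ is Alon--Tarsi if and only if $D-x$ is.

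For the forward direction, take $D$ witnessing that $(G,h_x)$ is AT. By the key observation $D-x$ is an Alon--Tarsi orientation of $G-x$, and we only need to check its indegrees: a vertex $v\notin\{u_1,u_2\}$ keeps its $D$-indegree (at least $1$), while $u_1$ loses only the edge $u_1x$, which in $D$ points away from $u_1$, so its indegree is unchanged (at least $1$), and likewise for $u_2$. Hence $G-x$ is degree-AT. For the converse, take an orientation $D'$ of $G-x$ witnessing degree-AT and form $D$ by adding $x$ with both edges $u_1x,u_2x$ oriented into $x$. Then $d^+_D(x)=0$, every other indegree is at least its value in $D'$ hence at least $1$, and $D$ is Alon--Tarsi by the key observation; so $D$ witnesses that $(G,h_x)$ is AT.

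The argument is short, and the only points needing care are the ones just flagged: that deleting $x$ does not drop the indegree of $u_1$ or $u_2$ (handled by noting those edges point toward $x$), and that ``degree-AT'' does not presuppose connectivity of $G-x$. As a sanity check, if a neighbor of $x$ is a leaf of $G$ then $G-x$ has an isolated vertex and is not degree-AT, consistent with the fact that no orientation of $G$ can simultaneously give $x$ indegree $2$ and that leaf indegree $1$.
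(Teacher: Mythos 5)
Your proof is correct and follows essentially the same route as the paper's: the forward direction rests on the observation that an orientation with $d^+(x)=0$ has $x$ isolated in every spanning Eulerian subgraph, so the counts for $D$ and $D-x$ agree, and the converse adds $x$ back with both edges oriented into it (which is exactly what the paper's \hyperlink{target:InducedSubgraph}{Subgraph Lemma} does when invoked here). You simply spell out two details the paper leaves implicit — that the neighbors of $x$ do not lose indegree when $x$ is deleted, and that $G-x$ need not be connected — both of which are handled correctly.
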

	\begin{proof}
		Let $D$ be an orientation of $G$ showing that $(G,h_x)$ is AT.  Now
		$d_{D}^-(x) = 2$, so no spanning Eulerian subgraph contains a cycle
		passing through $x$.  Therefore, the Eulerian subgraph counts in $G-x$ are
		different and $G-x$ is degree-AT.  The other direction is immediate from the
		\hyperlink{target:InducedSubgraph}{Subgraph Lemma}.
	\end{proof}
	
	Lemma~\ref{DegreeTwoVertex}, together with Lemma~\ref{DegreeATClassification},
	proves Case (1) of our \hyperlink{target:mainLemma}{Main Lemma}.  Before we can prove Case
	(2), we need a few more definitions and lemmas.
	A \emph{$\theta$-graph}\aside{$\theta$-graph} consists of two vertices joined by
	three internally disjoint paths, $P_1$, $P_2$, and $P_3$.  When we write $h_x$
	for a $\theta$-graph, we always assume that $d(x)=3$.  We will see
	shortly that if $H$ is a $\theta$-graph with $d_H(x)=3$, then $(H,h_x)$ is AT.
	Thus, the \hyperlink{target:InducedSubgraph}{Subgraph Lemma} implies that 
	if $(G,h_x)$ is not AT, then $G$ has no induced $\theta$-graph $H$ with $d_H(x)=3$. 
	A \emph{$T$-graph}\aside{$T$-graph} is formed from vertices $x, z_1, z_2, z_3$,
	by making the $z_i$ 
	pairwise adjacent, and joining each vertex $z_i$ to $x$ by a path $P_i$ (where
	the $P_i$ are disjoint).  Equivalently, a $T$-graph is formed from $K_4$ by
	subdividing each of the edges incident to $x$ zero or more times.
	
	Similar to the proof characterizing degree-AT graphs in~\cite{HKS},
	our approach in proving our \hyperlink{target:mainLemma}{Main Lemma}
	is to find an induced subgraph $H$ such that $(H,h_x)$ is AT, and apply the
	\hyperlink{target:InducedSubgraph}{Subgraph Lemma}.
	Thus, we need the following lemma about pairs $(H,h_x)$ that are AT.
	
	\begin{lem}
		\label{ThetaReducible}
		\label{TgraphReducible}
		\label{T+graphReducible}
		The pair $(G,h_x)$ is AT whenever (i) $G$ is a $\theta$-graph, (ii) $G$ is a
		$T$-graph and two paths $P_i$ have lengths of opposite parities, or (iii) $G$ is
		formed from a $T$-graph by adding an extra vertex with neighborhood
		$\{z_1,z_2,z_3\}$.
	\end{lem}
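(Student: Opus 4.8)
The plan is to handle the three cases by explicitly constructing, in each case, an orientation $D$ of $G$ with $d_D^+(v) \le d_G(v)-1$ for all $v \ne x$ and $d_D^+(x) \le d_G(x)-2$ (i.e. $d_D^-(x) \ge 2$), and then showing $EE(D) \ne EO(D)$ by exhibiting a parity-reversing involution on a cofinite set of spanning Eulerian subgraphs. In every case the directed acyclic ``backbone'' trick will force a canonical structure on the Eulerian subgraphs: orient things so that only a bounded piece near $x$ (or near the triangle $z_1z_2z_3$) can carry a directed cycle, so that all but one spanning Eulerian subgraph pairs off. This mirrors the proof of Lemma~\ref{GeneralEulerLemma} and of Lemma~\ref{DegreeATClassification}.

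For (i), the $\theta$-graph: let $P_1, P_2, P_3$ be the three $x$--$y$ paths. Orient $P_1$ and $P_2$ as directed paths from $x$ to $y$, and orient $P_3$ as a directed path from $y$ to $x$. Then $d_D^-(x) = 1$ coming from $P_3$ plus$\ldots$ — wait, this gives $d_D^-(x)=1$, not $2$; since $d(x)=3$ we need $d_D^+(x)\le 1$, i.e. $d_D^-(x)\ge 2$, so instead orient $P_1$ out of $x$ and both $P_2, P_3$ into $x$: that is, $P_1$ runs $x \to y$, and $P_2, P_3$ run $y \to x$. Internal vertices of $P_1$ get outdegree $1$; internal vertices of $P_2, P_3$ get outdegree $1$; vertex $y$ has outdegree $1$ (along $P_2$) — but then $d_D^+(y)=1 \le d(y)-1 = 2$, fine, and $d_D^+(x)=1$. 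The only directed cycles use $P_1$ together with one of $P_2$ or $P_3$; the two directed cycles $P_1 \cup P_2$ and $P_1 \cup \bar P_3$ are the only ones, so the spanning Eulerian subgraphs are: the empty one, $C_{12} := E(P_1)\cup E(P_2)$, and $C_{13}$. That is only three, and to get $EE \ne EO$ I need the two nonempty ones to have the same parity, which need not hold. So instead I will choose the orientation more cleverly: orient so that $y$ becomes a ``source'' for the cycle structure analogous to $z_1$ in Lemma~\ref{GeneralEulerLemma}. The clean statement is: pick an orientation in which exactly one edge leaves $x$ and exactly one edge leaves $y$, and $D - x$ is acyclic off of $y$; then every Eulerian subgraph with edges contains both these edges, and I pair subgraphs by swapping which of the two remaining paths the cycle continues along — but there are only two such paths through $y$, giving a bijection between ``cycle goes this way'' and ``cycle goes that way'', hence $EE = EO + 1$ or $EO = EE+1$. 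This is the main subtlety and I expect to need to verify carefully that degree constraints at $x$ (which needs $d_D^-(x)\ge 2$) are compatible with having exactly one edge leave $x$: since $d(x)=3$, ``exactly one edge leaves $x$'' means $d_D^-(x)=2$, which is exactly what $h_x$ demands. Good.

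For (ii), the $T$-graph with two $x$--$z_i$ paths, say $P_1$ and $P_2$, of opposite parity: I will orient $P_1$ out of $x$ and $P_2, P_3$ into $x$ (so $d_D^-(x)\ge 2$), orient the triangle $z_1 z_2 z_3$ as a directed $3$-cycle, and arrange internal-path orientations so the graph off the triangle is acyclic. The parity hypothesis is what makes $EE \ne EO$: the directed cycles are the triangle, the cycle $P_1 \cup \bar P_2 \cup (\text{edge } z_1 z_2)$ or similar, and combinations; following the Lemma~\ref{GeneralEulerLemma} template, I expect to find a parity-reversing involution on all nonempty Eulerian subgraphs \emph{except} the triangle alone, whose parity (odd, $3$ edges) then cannot be cancelled, giving $EE = EO + 1$ — and here the opposite-parity hypothesis on $P_1, P_2$ is needed precisely so the ``big'' cycle through $P_1$ and $P_2$ does not itself have the wrong parity to mess up the count. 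The center and left seeds of Figure~\ref{fig:seeds} are the $T$-graphs where \emph{all three} paths have the same parity, which is exactly the case this lemma excludes, so I will double-check the parity bookkeeping against those examples.

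For (iii), $G$ is a $T$-graph plus a new vertex $w$ adjacent to all of $z_1, z_2, z_3$: note $G$ contains the $\theta$-subgraph on $w, z_1, z_2, z_3$ (paths $w z_1, w z_2, w z_3$ form a $\theta$-graph with the triangle edges giving chords) — more usefully, $G - x$ with $w$ adjacent to the triangle is $K_4$ minus nothing plus a vertex, and in fact $G$ itself (when all $P_i$ have length $1$) is the rightmost seed of Figure~\ref{fig:seeds}? No — the rightmost seed is \emph{in} $\mathcal D$, hence \emph{not} AT, so (iii) must be excluding that configuration; the difference is that in case (iii) we are only claiming $(G,h_x)$ is AT, and indeed when the paths are subdivided the extra parity freedom kicks in. Rather than recompute, I will get (iii) from (i) or (ii) via the Subgraph Lemma: $G$ has an induced subgraph that is a $\theta$-graph with $d(x) = 3$, or a $T$-graph meeting condition (ii), obtained by deleting $w$ or by using $w$ to shortcut a triangle edge and create opposite parities. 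The main obstacle for (iii) is identifying the right induced subgraph; I expect a short case analysis on the parities of $P_1, P_2, P_3$, using $w$ to replace one triangle edge $z_i z_j$ by the length-$2$ path $z_i w z_j$, which flips a parity and reduces to (ii) whenever (ii) did not already apply to $G$ itself. Throughout, the single recurring technical point — and the one I'd budget the most care for — is checking the indegree-at-$x$ constraint ($d_D^-(x) \ge 2$) survives every orientation choice, but since $d(x)=3$ in all three cases this is just ``orient two of the three $x$-edges inward,'' which is always available.
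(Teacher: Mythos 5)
Your overall setup (explicit orientations with $d_D^-(x)\ge 2$ and $d_D^-(v)\ge 1$ elsewhere, then counting spanning Eulerian subgraphs) matches the paper's, but the parity bookkeeping in each case has a genuine gap, and all three gaps trace back to one missing idea: the paper never builds a parity-reversing involution here. It arranges for the \emph{total number} of spanning Eulerian subgraphs to be odd (or to split $3$--$1$), which forces $EE\ne EO$ no matter what the individual parities are. In case (i), your final orientation yields exactly three spanning Eulerian subgraphs (the empty one, $P_1\cup P_2$, and $P_1\cup P_3$), and since $3$ is odd you are already done; instead you try to pair $P_1\cup P_2$ with $P_1\cup P_3$ as a parity-reversing swap and conclude $EE=EO\pm 1$, but that swap reverses parity only when $|P_2|\not\equiv|P_3|\pmod 2$, which a $\theta$-graph need not satisfy (if both cycles are even, then $EE=3$ and $EO=0$). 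In case (ii) your proposed involution is exactly backwards: the opposite-parity hypothesis is used to show that the two long cycles (each using the path oriented out of $x$, one or two triangle edges, and one of the paths oriented into $x$) have the \emph{same} parity, so they reinforce rather than cancel; it is the empty subgraph and the triangle that form the pair of opposite parities, giving the $3$--$1$ split. If your involution (cancel the two long cycles against each other, leave the triangle unmatched) were available, the four subgraphs would split $2$--$2$ and the conclusion would fail.

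For case (iii), the reduction to (i) or (ii) via the Subgraph Lemma cannot work in the base case $G=K_5-xw$ (all $P_i$ of length $1$): deleting $w$ leaves $K_4$, whose three paths all have the same parity, and replacing a triangle edge $z_iz_j$ by the path $z_iwz_j$ does not yield an \emph{induced} subgraph, since $z_iz_j$ remains an edge of $G$. One can check that $K_5-xw$ has no proper induced subgraph $H$ with $(H,h_x)$ AT, so a direct computation is unavoidable. The paper gets this case from Lemma~\ref{GeneralEulerLemma} (whose proof gives $EE=EO+1$, hence $EE+EO$ odd) and then observes that subdividing the edges at $x$ and orienting the resulting paths consistently preserves the total count $EE+EO$, hence its oddness. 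Your three conclusions are all true, but each argument as written either does not establish $EE\ne EO$ or, in case (ii), is structured so that it would establish $EE=EO$.
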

	\begin{proof}
		In each case, we give an AT orientation $D$ of $G$ such that $d_D^-(v)\ge
		h_x(v)+1$ for each $v\in V(G)$.
		
		Case (i).  Orient the edges of each path $P_i$ consistently, with $P_1$ and
		$P_2$ into $x$ and $P_3$ out of $x$; this orientation satisfies the degree
		requirements.  Further, it has exactly three spanning Eulerian subgraphs,
		including the empty subgraph.  Thus, $EE+EO$ is odd, so $EE\ne EO$.
		
		Case (ii).  Let $P_1$ and $P_2$ be two paths with opposite parities.  As before,
		orient the edges of each path consistently, with $P_1$ and $P_2$ into $x$ and
		$P_3$ out of $x$.  Orient the three additional edges as $\vec{z_1z_2},
		\vec{z_2z_3}$, and $\vec{z_3z_1}$.  The resulting digraph $D$ has four spanning
		Eulerian subgraphs, 3 of one parity and 1 of the other.  Note that the empty
		subgraph and the subgraph $\{\vec{z_1z_2}, \vec{z_2z_3}, \vec{z_3z_1}\}$ have
		opposite parities.  Further, the parities are the same for the two subgraphs
		consisting of the directed cycles $xP_3z_3z_1P_1$ and $xP_3z_3z_1z_2P_2$.  So,
		$EE\ne EO$.
		
		Case (iii).  The simplest instance of this case is when $G=K_5-e$.  Now
		$(G,h_x)$ is AT by Lemma~\ref{GeneralEulerLemma}.  In fact, that proof gives the
		stronger statement that there exists an orientation $D$ satisfying the degree
		requirements such that $EE(D)=EO(D)+1$.  In particular, $EE+EO$ is odd.
		To handle larger instances of this case, we repeatedly subdivide edges incident
		to $x$ and orient each of the resulting paths consistently, and in the direction
		of the corresponding edge in $D$.  The resulting orientation satisfies the
		degree requirements.  Further, the sum $EE+EO$ remains unchanged, and thus odd.
		Hence, still $EE\ne EO$.
	\end{proof}
	
	\begin{lem}\label{AddPathReducible}
		Let $G$ be a $T$-graph. Let $P$ be a path of $G$ where all internal
		vertices of $P$ have degree 2 in $G$ and one endvertex of $P$ has degree 2 in
		$G$.  Form $G'$ from $G$ by adding a path $P'$ (of length at least 2) joining
		the endvertices of $P$.  Now $(G', h_x)$ is AT.
	\end{lem}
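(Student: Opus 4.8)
The plan is to reduce, via the \hyperlink{target:InducedSubgraph}{Subgraph Lemma}, to the case where $(G,h_x)$ itself is not AT, and then to exhibit an explicit Alon--Tarsi orientation of $G'$. First the set-up. Let $z_1z_2z_3$ be the triangle of the $T$-graph $G$, with $P_i$ the path joining $x$ to $z_i$. The degree-$2$ vertices of $G$ are exactly the internal vertices of $P_1,P_2,P_3$, so the degree-$2$ endvertex of $P$ lies in the interior of some $P_i$; as all internal vertices of $P$ also have degree $2$, the whole of $P$ lies on that path, and after relabelling we may assume $P\subseteq P_1$. Write $P_1=u_0u_1\cdots u_k$ with $u_0=x$ and $u_k=z_1$, so $P=u_pu_{p+1}\cdots u_q$ for some $0\le p<q\le k$ with $\{u_p,u_q\}\ne\{x,z_1\}$ (this inequality is exactly the statement that $P$ has a degree-$2$ endvertex). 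Since $|P'|\ge2$, attaching $P'$ introduces no edge among vertices of $G$, so $G$ is an induced subgraph of $G'$; hence by the \hyperlink{target:InducedSubgraph}{Subgraph Lemma} it suffices to treat the case that $(G,h_x)$ is not AT. Applying Lemma~\ref{ThetaReducible}(ii) to the $T$-graph $G$, this forces $|P_1|\equiv|P_2|\equiv|P_3|\pmod 2$; call this common parity $\pi$.

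Now I would orient $G'$ as follows: orient $P_1$ consistently from $x$ to $z_1$; orient $P'$ consistently, directed from whichever endvertex of $P$ is nearer $x$ along $P_1$ toward the other (so from $x$ toward $u_q$ if $p=0$, and from $u_p$ toward $u_q$ otherwise); orient the triangle as $\vec{z_1z_2},\vec{z_1z_3},\vec{z_2z_3}$; and orient $P_2$ and $P_3$ consistently toward $x$. Call this orientation $D$. Every vertex $v\ne x$ has an incoming edge in $D$ (along its path, or an incoming triangle edge for $z_2$ and $z_3$), so $d_D^+(v)\le d_{G'}(v)-1$; and $x$ has exactly two incoming edges, the last edges of $P_2$ and $P_3$, so $d_D^+(x)=d_{G'}(x)-2=d_{G'}(x)-1-h_x(x)$. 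Thus $D$ satisfies the required out-degree bounds; it remains to check $EE(D)\ne EO(D)$.

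The key structural claim is that the spanning Eulerian subgraphs of $D$ are precisely the empty one together with the directed cycles of $D$, of which there are exactly six. Indeed, $D-x$ is acyclic: list $u_1,\dots,u_{k-1}$, splice the interior of $P'$ into this list next to $u_p$, then append $z_1,z_2,z_3$ and then the interiors of $P_2$ and $P_3$, and check that every edge of $D-x$ runs forward in this order. So every directed cycle of $D$ passes through $x$; tracing forced out-edges, such a cycle is $Q\cup R$, where $Q$ is one of the two $x$-to-$z_1$ paths (either $P_1$, of length $|P_1|$, or the detour $P_1[x,u_p]\cup P'\cup P_1[u_q,z_1]$, of length $|P_1|-|P|+|P'|$), and $R$ is one of the three $z_1$-to-$x$ paths: $z_1z_2z_3$ then $P_3$; or $z_1z_2$ then $P_2$; or $z_1z_3$ then $P_3$ (of lengths $2+|P_3|$, $1+|P_2|$, $1+|P_3|$). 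Any two of these six cycles share an edge — they overlap on the common portion of their $x$-to-$z_1$ sub-paths, which is nonempty because $\{u_p,u_q\}\ne\{x,z_1\}$ — so a spanning Eulerian subgraph, being an edge-disjoint union of directed cycles, is empty or a single one of the six. Finally, reducing the six cycle lengths mod $2$ using $|P_1|\equiv|P_2|\equiv|P_3|\equiv\pi$, their parities form the multiset $\{0,1,1\}\cup\{\,|P|+|P'|,\ |P|+|P'|+1,\ |P|+|P'|+1\,\}$; so three cycles are even and three odd when $|P|+|P'|$ is odd, and two are even and four odd when $|P|+|P'|$ is even. Counting the empty subgraph among the even spanning Eulerian subgraphs gives $EE(D)-EO(D)=\pm1\ne0$ in either case, so $D$ witnesses that $(G',h_x)$ is AT.

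I expect the delicate points to be Step 3: verifying that $D-x$ is acyclic, that the six cycles really are all the spanning Eulerian subgraphs (this is where the hypothesis that $P$ has a degree-$2$ endvertex, i.e.\ $\{u_p,u_q\}\ne\{x,z_1\}$, is genuinely used), and keeping the mod-$2$ bookkeeping straight. One could instead try to avoid the explicit orientation by locating inside $G'$ an induced $\theta$-graph through $x$ (apply Lemma~\ref{ThetaReducible}(i)) or an induced $\theta$-graph $H$ with $d_H(x)=2$ such that $H-x$ has an even-cycle block (apply Lemmas~\ref{DegreeTwoVertex} and~\ref{DegreeATClassification}); this covers most configurations of $P$, but the case where $P$ is a single edge resists it, so the uniform orientation above seems the cleaner route.
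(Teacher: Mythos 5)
Your proof is correct, and it takes a genuinely different route from the paper's. The paper also reduces (via the \hyperlink{target:InducedSubgraph}{Subgraph Lemma}) to the case where $(G,h_x)$ is not AT, but it then reuses the orientation from Lemma~\ref{TgraphReducible}(ii): $P_1,P_2$ into $x$, $P_3$ out of $x$, the triangle oriented as a directed $3$-cycle, and $P'$ oriented \emph{antiparallel} to $P$, so that $P\cup P'$ is the unique directed cycle through $P'$; it concludes that the Eulerian-subgraph counts, equal for $G$, must differ by $1$ for $G'$. Your orientation differs in two essential ways: the triangle is oriented transitively ($\vec{z_1z_2},\vec{z_1z_3},\vec{z_2z_3}$), so it is not a directed cycle, and $P'$ is oriented parallel to $P$; you then enumerate all seven spanning Eulerian subgraphs and do the parity count directly. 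This extra work buys something real. In the paper's orientation the directed triangle $T$ survives in $D'$ with its edges disjoint from $E(P\cup P')$, so the new spanning Eulerian subgraphs are not only $P\cup P'$ but also $(P\cup P')\cup T$; since $|T|=3$, these two have opposite parities and cancel, giving $EE(D')=EO(D')$ rather than a difference of $1$. (Concretely, with $P_1,P_2$ of length $1$, $P_3$ of length $3$, $P$ the middle edge of $P_3$, and $|P'|=2$, the paper's orientation yields $EE=EO=3$.) Your transitive orientation of the triangle eliminates exactly this parasitic Eulerian subgraph, and your observation that the six directed cycles pairwise share an edge --- which is precisely where the hypothesis that $P$ has a degree-$2$ endvertex enters --- is the correct replacement for the paper's one-line conclusion. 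So your argument not only establishes the lemma but repairs what appears to be a genuine gap in the published proof; the statement of the lemma itself remains true.
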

	\begin{proof}
		We can assume that $G$ is not AT; otherwise, we are done by the
		\hyperlink{target:InducedSubgraph}{Subgraph Lemma}.  By symmetry, assume $P$ is a
		subpath of $P_3$. First, we get an orientation of $G$ with indegree at least 1
		for all vertices and $d^-(x) = 2$. Orient $P_1$ from $z_1$ to $x$, $P_2$ from
		$z_2$ to $x$, $P_3$ from $x$ to $z_3$, and the triangle as $\vec{z_1z_2},
		\vec{z_2z_3}$, and $\vec{z_3z_1}$.  To get an orientation of $G'$, orient the
		new path $P'$ consistently, and opposite of $P$.  Now the only directed cycle
		containing edges of $P'$ is $P'P$.  Since the Eulerian subgraph counts are
		equal for $G$, they differ by 1 for $G'$. 
	\end{proof}
	
	Now we can prove Case (2) of our \hyperlink{target:mainLemma}{Main Lemma}.  For
	reference, we restate it.


\begin{figure}
	\centering
\begin{tikzpicture}[scale = 8]
\tikzstyle{VertexStyle} = []
\tikzstyle{EdgeStyle} = []
\tikzstyle{labeledStyle}=[shape = circle, minimum size = 6pt, inner sep = 1.2pt, draw]
\tikzstyle{unlabeledStyle}=[shape = circle, minimum size = 6pt, inner sep = 1.2pt, draw, fill]
\Vertex[style = labeledStyle, x = 0.650, y = 0.550, L = \small {$0$}]{v0}
\Vertex[style = labeledStyle, x = 0.850, y = 0.700, L = \small {$0$}]{v1}
\Vertex[style = labeledStyle, x = 1.050, y = 0.550, L = \small {$0$}]{v2}
\Vertex[style = labeledStyle, x = 0.850, y = 0.950, L = \small {$1$}]{v3}
\Vertex[style = labeledStyle, x = 0.750, y = 0.750, L = \small {$0$}]{v4}
\Edge[label = \tiny {}, labelstyle={auto=right, fill=none}](v1)(v0)
\Edge[label = \tiny {}, labelstyle={auto=right, fill=none}](v1)(v2)
\Edge[label = \tiny {}, labelstyle={auto=right, fill=none}](v2)(v0)
\Edge[label = \tiny {}, labelstyle={auto=right, fill=none}](v4)(v0)
\Edge[label = \tiny {}, labelstyle={auto=right, fill=none}](v4)(v3)
\Edge[label = \tiny {}, labelstyle={auto=right, fill=none}](v3)(v1)
\Edge[label = \tiny {}, labelstyle={auto=right, fill=none}](v3)(v2)
\end{tikzpicture}
\label{fig:SubdividedK4}
~~~
\begin{tikzpicture}[scale = 8]
\tikzstyle{VertexStyle} = []
\tikzstyle{EdgeStyle} = []
\tikzstyle{labeledStyle}=[shape = circle, minimum size = 6pt, inner sep = 1.2pt, draw]
\tikzstyle{unlabeledStyle}=[shape = circle, minimum size = 6pt, inner sep = 1.2pt, draw, fill]
\Vertex[style = labeledStyle, x = 0.650, y = 0.550, L = \small {$0$}]{v0}
\Vertex[style = labeledStyle, x = 0.850, y = 0.700, L = \small {$0$}]{v1}
\Vertex[style = labeledStyle, x = 1.050, y = 0.550, L = \small {$0$}]{v2}
\Vertex[style = labeledStyle, x = 0.850, y = 0.950, L = \small {$1$}]{v3}
\Vertex[style = labeledStyle, x = 0.750, y = 0.750, L = \small {$0$}]{v4}
\Vertex[style = labeledStyle, x = 0.950, y = 0.750, L = \small {$0$}]{v5}
\Edge[label = \tiny {}, labelstyle={auto=right, fill=none}](v1)(v0)
\Edge[label = \tiny {}, labelstyle={auto=right, fill=none}](v1)(v2)
\Edge[label = \tiny {}, labelstyle={auto=right, fill=none}](v2)(v0)
\Edge[label = \tiny {}, labelstyle={auto=right, fill=none}](v4)(v0)
\Edge[label = \tiny {}, labelstyle={auto=right, fill=none}](v4)(v3)
\Edge[label = \tiny {}, labelstyle={auto=right, fill=none}](v5)(v2)
\Edge[label = \tiny {}, labelstyle={auto=right, fill=none}](v5)(v3)
\Edge[label = \tiny {}, labelstyle={auto=right, fill=none}](v3)(v1)
\end{tikzpicture}
	\caption{The pair $(G,h_x)$ is AT, when $G$ is formed from $K_4$ by subdividing
one or two edges incident to $x$.}
	\label{fig:TriangleRuinsPath}
\end{figure}


\begin{figure}
	\centering
\begin{tikzpicture}[scale = 8]
\tikzstyle{VertexStyle} = []
\tikzstyle{EdgeStyle} = []
\tikzstyle{labeledStyle}=[shape = circle, minimum size = 6pt, inner sep = 1.2pt, draw]
\tikzstyle{unlabeledStyle}=[shape = circle, minimum size = 6pt, inner sep = 1.2pt, draw, fill]
\Vertex[style = labeledStyle, x = 0.650, y = 0.550, L = \small {$0$}]{v0}
\Vertex[style = labeledStyle, x = 0.850, y = 0.700, L = \small {$0$}]{v1}
\Vertex[style = labeledStyle, x = 1.050, y = 0.550, L = \small {$0$}]{v2}
\Vertex[style = labeledStyle, x = 0.850, y = 0.950, L = \small {$1$}]{v3}
\Vertex[style = labeledStyle, x = 0.850, y = 0.600, L = \small {$0$}]{v4}
\Edge[label = \tiny {}, labelstyle={auto=right, fill=none}](v1)(v0)
\Edge[label = \tiny {}, labelstyle={auto=right, fill=none}](v1)(v2)
\Edge[label = \tiny {}, labelstyle={auto=right, fill=none}](v1)(v3)
\Edge[label = \tiny {}, labelstyle={auto=right, fill=none}](v2)(v0)
\Edge[label = \tiny {}, labelstyle={auto=right, fill=none}](v3)(v0)
\Edge[label = \tiny {}, labelstyle={auto=right, fill=none}](v3)(v2)
\Edge[label = \tiny {}, labelstyle={auto=right, fill=none}](v4)(v1)
\Edge[label = \tiny {}, labelstyle={auto=right, fill=none}](v4)(v0)
\Edge[label = \tiny {}, labelstyle={auto=right, fill=none}](v4)(v2)
\end{tikzpicture}
~~~
\begin{tikzpicture}[scale = 8]
\tikzstyle{VertexStyle} = []
\tikzstyle{EdgeStyle} = []
\tikzstyle{labeledStyle}=[shape = circle, minimum size = 6pt, inner sep = 1.2pt, draw]
\tikzstyle{unlabeledStyle}=[shape = circle, minimum size = 6pt, inner sep = 1.2pt, draw, fill]
\Vertex[style = labeledStyle, x = 0.650, y = 0.550, L = \small {$0$}]{v0}
\Vertex[style = labeledStyle, x = 0.850, y = 0.700, L = \small {$0$}]{v1}
\Vertex[style = labeledStyle, x = 1.050, y = 0.550, L = \small {$0$}]{v2}
\Vertex[style = labeledStyle, x = 0.850, y = 0.950, L = \small {$1$}]{v3}
\Vertex[style = labeledStyle, x = 0.750, y = 0.750, L = \small {$0$}]{v4}
\Vertex[style = labeledStyle, x = 0.850, y = 0.800, L = \small {$0$}]{v5}
\Vertex[style = labeledStyle, x = 0.950, y = 0.750, L = \small {$0$}]{v6}
\Vertex[style = labeledStyle, x = 0.850, y = 0.600, L = \small {$0$}]{v7}
\Edge[label = \tiny {}, labelstyle={auto=right, fill=none}](v1)(v0)
\Edge[label = \tiny {}, labelstyle={auto=right, fill=none}](v1)(v2)
\Edge[label = \tiny {}, labelstyle={auto=right, fill=none}](v2)(v0)
\Edge[label = \tiny {}, labelstyle={auto=right, fill=none}](v4)(v0)
\Edge[label = \tiny {}, labelstyle={auto=right, fill=none}](v4)(v3)
\Edge[label = \tiny {}, labelstyle={auto=right, fill=none}](v5)(v1)
\Edge[label = \tiny {}, labelstyle={auto=right, fill=none}](v5)(v3)
\Edge[label = \tiny {}, labelstyle={auto=right, fill=none}](v6)(v2)
\Edge[label = \tiny {}, labelstyle={auto=right, fill=none}](v6)(v3)
\Edge[label = \tiny {}, labelstyle={auto=right, fill=none}](v7)(v1)
\Edge[label = \tiny {}, labelstyle={auto=right, fill=none}](v7)(v0)
\Edge[label = \tiny {}, labelstyle={auto=right, fill=none}](v7)(v2)
\end{tikzpicture}
%		\caption{This is AT.}
		\label{fig:thebigone}
	\caption{(a) The pair $(G,h_x)$ is AT, where $G=K_5-xy$. 
%and $x$ is an endpoint of the missing edge. 
(b) The pair $(G,h_x)$ is AT, where $G$ is formed from $K_5-e$ by
subdividing each edge incident to $x$.}
	\label{fig:K5minus}
\end{figure}

	\begin{lem}
		\label{TwoConnectedClassification}
		Let $G$ be 2-connected, and choose $x\in V(G)$ with $d(x)\ge 3$. 
		Now $(G,h_x)$ is AT if and only if $G$ is not complete and $(G,h_x) \not \in \D$.
	\end{lem}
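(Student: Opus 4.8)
The plan is to prove the two implications separately; the reverse implication (``$G$ not complete and $(G,h_x)\notin\D$ implies $(G,h_x)$ is AT'') carries essentially all of the work. For the forward implication I would argue the contrapositive: if $G$ is complete or $(G,h_x)\in\D$, then $(G,h_x)$ is not AT. The case $(G,h_x)\in\D$ is exactly Proposition~\ref{prop:easyD}. If $G=K_n$ with $n=d(x)+1\ge 4$, assign $x$ the list $\{1,\dots,n-2\}$ and every other vertex the list $\{1,\dots,n-1\}$; these lists have sizes $d(x)-1$ and $d(v)$ respectively, yet a proper coloring of $K_n$ needs $n$ distinct colors while only $n-1$ are available. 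So $K_n$ is not $(d_G-h_x)$-choosable, and hence, by the contrapositive of \hyperlink{target:thmB}{Theorem~B}, not $(d_G-h_x)$-AT; that is, $(G,h_x)$ is not AT.

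For the reverse implication, suppose it fails and choose a counterexample $(G,x)$ with $|V(G)|$ minimum: $G$ is $2$-connected, $d(x)\ge 3$, $G$ is not complete, $(G,h_x)\notin\D$, and yet $(G,h_x)$ is not AT. The first task is to pin down $G-x$. Since $G$ is $2$-connected, $G-x$ is connected, and every block of $G-x$ is an induced subgraph of $G$ avoiding $x$; so the \hyperlink{target:InducedSubgraph}{Subgraph Lemma} (using $d_G(x)\ge 2$) together with Lemma~\ref{DegreeATClassification} forces each such block to be complete or an odd cycle, i.e., $G-x$ is a connected Gallai tree. Moreover $N_G(x)$ must meet every endblock of $G-x$ in a noncutvertex of $G-x$, since otherwise that endblock's cutvertex would be a cutvertex of $G$. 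The \hyperlink{target:InducedSubgraph}{Subgraph Lemma} applied to induced subgraphs $H$ containing $x$ also rules out every $H$ for which $(H,h_x)$ is AT: no induced $\theta$-graph with $x$ a degree-$3$ vertex, no induced $T$-graph with $x$ as apex having two paths of opposite parity, no induced copy of a $T$-graph with a vertex added adjacent to its triangle (cases~(i)--(iii) of Lemma~\ref{ThetaReducible}), no induced ``$T$-graph plus extra path'' of Lemma~\ref{AddPathReducible}, and no induced copy of any of the small graphs shown AT in Figures~\ref{fig:TriangleRuinsPath} and~\ref{fig:K5minus}. Finally, Lemma~\ref{GeneralEulerLemma} forbids any ``twin'' pair $z_1,z_2\in V(G-x)$ with $N_{G-x}[z_1]=N_{G-x}[z_2]$, $x$ adjacent to $z_1$, and $x$ nonadjacent to $z_2$, since (as $d(x)\ge 3$) such a pair yields an orientation witnessing that $(G,h_x)$ is AT.

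Next I would eliminate long chains of degree-$2$ vertices. If $G$ contains an induced four-vertex path $u_1v_1v_2u_2$ with $d_G(v_1)=d_G(v_2)=2$, then $(G,h_x)$ is a stretch of $(\widehat G,h_x)$, where $\widehat G=(G-v_1-v_2)+u_1u_2$; by part~(1) of the \hyperlink{target:SubdivideTwice}{Stretching Lemma}, $(\widehat G,h_x)$ is again not AT, and $\widehat G$ is still $2$-connected with $d_{\widehat G}(x)\ge 3$. If $\widehat G$ is not complete, minimality gives $(\widehat G,h_x)\in\D$; then either $u_1u_2$ is a stretchable edge of the corresponding $\D$-pair, so stretching it back shows $(G,h_x)\in\D$, or $u_1u_2$ is a ``core'' edge, in which case $G$ already contains one of the reducible induced subgraphs excluded above --- either way a contradiction. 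The finitely many cases in which $\widehat G$ is complete are pinned down using the Gallai-tree structure of $G-x$, Corollary~\ref{ReduceP4Cor}, and Figures~\ref{fig:TriangleRuinsPath}--\ref{fig:K5minus}, and checked directly. So $G$ has no induced four-vertex path whose two internal vertices both have degree $2$ in $G$. Together with ``$G-x$ is a Gallai tree,'' this leaves only bounded structure, which I would resolve by a case analysis on $d_G(x)$ and on the block structure of $G-x$ --- whether $G-x$ is itself $2$-connected, or a path of complete blocks, how many endblocks it has, and exactly where $N_G(x)$ meets each block and endblock --- exhibiting in each case one of the forbidden AT induced subgraphs above, or else concluding that $G$ is complete or that $(G,h_x)\in\D$; every outcome contradicts the choice of $G$.

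The main obstacle, carrying essentially all of the real work, is this last case analysis: proving that a $2$-connected $(G,h_x)$ with $d(x)\ge 3$ that has no reducible induced subgraph and no induced four-vertex path with two degree-$2$ internal vertices must be complete or a $\D$-pair. The delicate points are (i) carrying the parity bookkeeping needed to invoke the opposite-parity $T$-graph case of Lemma~\ref{ThetaReducible}, (ii) checking that $2$-connectedness really does survive each unstretching step, and (iii) correctly isolating the genuinely sporadic small configurations --- the three seeds of Figure~\ref{fig:seeds}, together with $K_4$ or $K_5-e$ with a few edges incident to $x$ subdivided --- which must be handled directly via the lemmas of this section and Figures~\ref{fig:TriangleRuinsPath}--\ref{fig:K5minus} rather than by reduction.
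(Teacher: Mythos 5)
Your forward direction and your preliminary reductions are sound: Proposition~\ref{prop:easyD} plus a list-coloring argument for $K_n$ handles one implication, and the deductions that $G-x$ is a Gallai tree, that $N(x)$ meets every endblock of $G-x$ in a noncutvertex, and that $G$ contains none of the AT configurations of Lemmas~\ref{GeneralEulerLemma}, \ref{ThetaReducible}, and~\ref{AddPathReducible} all follow correctly from the \hyperlink{target:InducedSubgraph}{Subgraph Lemma}. The unstretching step is also a legitimate reduction (though even there you leave unverified the claim that stretching a non-bold edge of a $\D$-pair always yields an AT pair --- for the Moser spindle, which has no bold edges, this is eleven separate checks, and for iterated stretchings of the first two seeds it is an infinite family needing a uniform argument). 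But the decisive step --- showing that an unstretched, $2$-connected, non-complete $G$ with $d(x)\ge 3$, with $G-x$ a Gallai tree and none of the forbidden induced subgraphs, must lie in $\D$ --- is only announced (``a case analysis on $d_G(x)$ and on the block structure of $G-x$''), and you yourself identify it as ``carrying essentially all of the real work.'' That case analysis \emph{is} the lemma; without it the proof has not been given. It is not even clear in advance that your invariants leave ``only bounded structure'': for instance, $G-x$ can be an arbitrarily large complete block, and ruling that out requires actually invoking Lemma~\ref{GeneralEulerLemma} in the right place, which is the kind of argument your outline defers.

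For comparison, the paper does not use a minimal counterexample or unstretching at this stage. It argues directly: take a smallest connected subgraph $H'$ of $G-x$ containing three neighbors of $x$, show that $H=G[V(H')\cup\{x\}]$ is an induced $\theta$-graph or $T$-graph, dispose of the $\theta$-graph and opposite-parity $T$-graph cases by Lemma~\ref{ThetaReducible}, and then --- when $(H,h_x)\in\D$ --- analyze how an additional vertex $u\in V(G-H)$ attaches, via a minimal $2$-connected induced subgraph $H_u\supseteq V(H)\cup\{u\}$ and the block $B_u$ of $H_u-x$ containing $u$. The two resulting cases (either $B_u$ meets $H$ in $\{z_1,z_2,z_3\}$, forcing $H_u=K_5$ and ultimately $G$ complete or an induced $K_5-xy$, or $B_u$ attaches along a single path $P_i$, leading to the Moser spindle or a configuration reducible by Lemma~\ref{AddPathReducible}) constitute exactly the analysis your proposal omits. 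You would need to supply an argument of comparable depth to close the gap.
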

	\begin{proof}
		When $(G,h_x)\in \D$ the lemma holds by Proposition~\ref{prop:easyD}.  
		
		Now let $G$ be 2-connected, choose $x\in V(G)$ with $d(x)\ge 3$, and suppose that
		$(G,h_x)\notin \D$.  Since $G-x$ is connected, let $H'$ be a smallest connected
		subgraph of $G-x$ containing three neighbors of $x$; call these neighbors $w_1$,
		$w_2$, and $w_3$.  Consider a spanning tree $T$ of $H'$.  Since $H'$ is minimum,
		each leaf of $T$ is among $\{w_1, w_2, w_3\}$.  If $T$ is a path, then $H'$ is also a
		path.  Otherwise, $T$ is a subdivision of $K_{1,3}$.  Let $s$ be the vertex with
		$d_T(s)=3$.  If $E(G)-E(T)$ has any edge with both ends outside of $N(s)$, then
		we can delete some vertex in $N(s)$ and remain connected, contradicting the
		minimality of $H'$.  Similarly, if $N(s)$ contains at least two edges, then
		$H'-s$ still connects, $w$, $y$, and $z$.  Now let $H$ be the subgraph of $G$
		induced by $V(H')\cup\{x\}$.  Note that $H$ is either a $\theta$-graph (if $H'$
		is a tree) or a $T$-graph (if $H'$ has one extra edge in $N(s)$).  
		
		If $H$ is a $\theta$-graph, then $(G,h_x)$ is AT, by
		Lemma~\ref{ThetaReducible}.i and 
		the \hyperlink{target:InducedSubgraph}{Subgraph Lemma}.
		So assume $H$ is a $T$-graph.  Let $z_1$, $z_2$, $z_3$ be the vertices
		of degree 3 (other than $x$), and let $P_1$, $P_2$, and $P_3$ denote the paths
		from $x$ to $z_1$, $z_2$, and $z_3$; when we write $V(P_i)$, we exclude $x$ and
		$z_i$, so possibly $V(P_i)$ is empty for one or more $i\in\{1,2,3\}$.
		If any two of $P_1$, $P_2$, and $P_3$ have lengths with opposite parities, then
		we are done by Lemma~\ref{TgraphReducible}.ii; so assume not.  
		
		Now $(H,h_x)\in \D$, so we can assume that $V(G-H) \ne \emptyset$.  Choose $u
		\in V(G-H)$, and let $H_u$ be a minimal $2$-connected induced subgraph of $G$
		that contains $V(H) \cup \set{u}$.  By the \hyperlink{target:InducedSubgraph}{Subgraph Lemma} and Lemma~\ref{DegreeATClassification}, $G-x$ is a Gallai tree. 
		Thus, so is $H_u-x$; in particular, the block $B_u$ of $H_u-x$ containing $u$
		is complete or an odd cycle.  Therefore, we either have (i) $V(B_u) \cap V(H) =
		\set{z_1, z_2,z_3}$ or (ii)  $V(B_u) \cap V(H) \subseteq P_i \cup \set{z_i}$
		for some $i \in \{1,2,3\}$.
		
		Suppose (i) happens. Now $N_G(u) \cap V(H_u - x) = \set{z_1,z_2,z_3}$. If $x
		\nonadj u$, then $(G,h_x)$ is AT by the \hyperlink{target:InducedSubgraph}{Subgraph Lemma} and Lemma~\ref{T+graphReducible}.iii.  If $x \adj u$, then $x$ must have odd
		length paths to each $z_i$, by Lemma~\ref{TgraphReducible}.ii, with $u$ in the role
		of some $z_i$.  Further, $x \adj z_i$ for all $i \in \{1,2,3\}$, since
		otherwise $(G,h_x)$ is AT by the \hyperlink{target:InducedSubgraph}{Subgraph Lemma},
		Lemma~\ref{T+graphReducible}.iii, and the \hyperlink{target:SubdivideTwice}{Stretching Lemma}.  So, $H=K_4$ and $H_u=K_5$.  This implies that (ii) cannot happen for
		any vertex in $V(G-H)$, since if $V(B_u)\cap V(H)=\{z_i\}$ for some $i$, then
		$(G,h_x)$ is AT by Lemma~\ref{ThetaReducible}.i
		and the \hyperlink{target:InducedSubgraph}{Subgraph Lemma}).
		So (i) happens for every vertex in $V(G-H)$; in particular, $V(G-H)$ is joined
		to $\set{x,z_1,z_2,z_3}$.  Since $G$ is not complete, $G-x$ must contain an induced
		copy of Figure~\ref{fig:K5minus}(a); hence, $(G,h_x)$ is AT by
		Lemma~\ref{T+graphReducible}.iii and the \hyperlink{target:InducedSubgraph}{Subgraph Lemma}.
		
		Assume instead that (ii) happens for every vertex in $V(G-H)$, including $u$. 
		By symmetry, assume that $V(B_u) \cap V(H) \subseteq P_1$.  Let $z_1P_1
		= v_1v_2\cdots v_{\ell}$, where $v_{\ell}\adj x$.  First, assume that $B_u$ is
		an odd cycle of length at least $5$.  If there is $u' \in V(B_u)\setminus V(H)$
		with $u' \adj x$, then $G$ contains a $\theta$-graph and  $(G,h_x)$ is AT, by
		Lemma~\ref{ThetaReducible}.i and the \hyperlink{target:InducedSubgraph}{Subgraph Lemma}.  So, we may assume that $u' \nonadj x$ for all $u' \in V(B_u)\setminus
		V(H)$.  Now we are done by Lemma~\ref{AddPathReducible} and the
		\hyperlink{target:InducedSubgraph}{Subgraph Lemma}.
		
		So instead we assume that $B_u$ is complete. If $V(B_u) \cap V(H)=\set{v_\ell}$,
		then $G$ has an induced $\theta$-graph $J$, where $d_J(x)=d_J(v_\ell)=3$, so we are
		done by Lemma~\ref{ThetaReducible}.i and the \hyperlink{target:InducedSubgraph}{Subgraph Lemma}.  Thus, we must have $V(B_u) \cap V(H) = \set{v_{j}, v_{j+1}}$ for some
		$j \in \irange{\ell-1}$.  In particular, $B_u$ is a triangle.  If $u\nonadj x$,
		then $(G,h_x)$ is AT by the \hyperlink{target:InducedSubgraph}{Subgraph Lemma} and
		Lemma~\ref{AddPathReducible}.  So we conclude that $u\adj x$, which requires
		$j=\ell-1$, by the minimality of $H$.  Hence, $H_u$ is formed from a $T$-graph
		by adding a vertex $u$ that is adjacent to $x$ and also to the vertices of a
		$K_2$ endblock $D_u$ of $H-x$. 
		Suppose there are distinct vertices $u_1, u_2\in V(G-H)$ 
		%such that $D_{u_1} = D_{u_2}$.  
		adjacent to vertices of the same $K_2$ endblock.
		Now $G$ contains an induced copy of Figure \ref{fig:K5minus}(a), so 
		$(G,h_x)$ is AT by Lemma~\ref{T+graphReducible}.iii and the
		\hyperlink{target:InducedSubgraph}{Subgraph Lemma}.  Thus, each $K_2$ endblock has at
		most one such $u$.  
		
		Let $t$ be the number of $K_2$ endblocks in $H-x$.
		By construction, $t\le 3$; this implies that $|V(G - H)| \le t\le 3$.   
		If $t = 0$, then $G = H = K_4$, which contradicts that $G$ is not complete.  
		If $t=1$, then $G = H_u$, for the unique $u \in V(G-H)$; this is the Moser
		spindle, shown in Figure~\ref{fig:seeds}(c).  So, assume that $t \in
		\set{2,3}$.  By symmetry, assume that for each $i\in\{1,2\}$ there exists $u_i$
		such that $V(B_{u_i})\subseteq P_i\cup\{z_i\}$.  Now the subgraph induced by
		$\set{u_2}\cup V(H-P_1)$ is reducible by Lemma~\ref{AddPathReducible}. 
		So, again we are done by the \hyperlink{target:InducedSubgraph}{Subgraph Lemma}.  
	\end{proof}
	
	Taken together, Lemmas~\ref{TwoConnectedClassification}
	and~\ref{DegreeTwoVertex}, with
	Lemma~\ref{DegreeATClassification}, prove our \hyperlink{target:mainLemma}{Main Lemma}.
	However, this characterizaton requires that $G$ be 2-connected.
	Now we extend our result to the more general case, when $G$ need only be
	connected.  We use the following two definitions.  Let $G$ be a graph, $x$
	a vertex of $G$, and $B$ a block of $G$.  An \emph{$x$-lobe of
		$G$}\aside{$x$-lobe} is a maximal subgraph $A$ such that $A-x$ is connected.  A
	\emph{$B$-lobe of $G$}\aside{$B$-lobe} is a maximal subgraph $A$ such that
	$A-B$ is connected, and $A$ includes a single vertex of $B$.  
	
	\begin{thm}
		If $G$ is connected and $x \in V(G)$, then $(G, h_x)$ is not AT if and only if
		\label{thm:1connected}
	\end{thm}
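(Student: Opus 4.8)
The plan is to prove both directions of Theorem~\ref{thm:1connected} by induction on the number of blocks of $G$, with the base case (one block, i.e.\ $G$ $2$-connected) supplied by our \hyperlink{target:mainLemma}{Main Lemma}, that is, by Lemmas~\ref{TwoConnectedClassification}, \ref{DegreeTwoVertex}, and~\ref{DegreeATClassification}; the degenerate case $d_G(x)\le 1$ is immediate, since then $(G,h_x)$ asks for an orientation with $d^+(x)\le d_G(x)-2<0$. For the inductive step the natural dichotomy is whether or not $x$ is a cutvertex, and in both subcases the idea is to reduce to the Main Lemma via Lemma~\ref{CutvertexPatch} and the \hyperlink{target:InducedSubgraph}{Subgraph Lemma}.

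Suppose first that $x$ is \emph{not} a cutvertex, so $G-x$ is connected and $x$ lies in a unique block $B$. If some $B$-lobe $A$ is not a Gallai tree, then $A$ is degree-AT by Lemma~\ref{DegreeATClassification}, and since $x\notin V(A)$ and $d_G(x)\ge 2$, the \hyperlink{target:InducedSubgraph}{Subgraph Lemma} gives that $(G,h_x)$ is AT. So we may assume every $B$-lobe is a Gallai tree; in particular $G-x$, and hence also $B-x$, is a Gallai tree. Next I peel off the $B$-lobes one at a time using Lemma~\ref{CutvertexPatch}: a Gallai-tree lobe $A$ attached at a vertex $y\ne x$ is $f$-AT with $f(y)=d_A(y)+1$ (an acyclic orientation rooted at $y$ witnesses this) but is never degree-AT, so Lemma~\ref{CutvertexPatch} forces the side of that cut containing $x$ to carry the full budget at $y$; iterating over all $B$-lobes shows that $(G,h_x)$ is AT if and only if $(B,h_x)$ is AT. As $B$ is $2$-connected, the \hyperlink{target:mainLemma}{Main Lemma} now finishes this subcase: when $x$ is not a cutvertex, $(G,h_x)$ is not AT exactly when $G-x$ is a Gallai tree and, moreover, $d_G(x)\le 2$ or $B$ is complete or $(B,h_x)\in\D$.

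Now suppose $x$ \emph{is} a cutvertex, with $x$-lobes $A_1,\dots,A_k$ where $k\ge 2$; each $A_i$ has strictly fewer blocks than $G$ and has $x$ as a non-cutvertex, so the inductive hypothesis together with the previous paragraph describes each $A_i$. Iterating Lemma~\ref{CutvertexPatch} across the cutvertex $x$ shows that $(G,h_x)$ is AT if and only if there exist integers $c_1,\dots,c_k$ with $\sum_i c_i\ge 2$ such that, for each $i$, the graph $A_i$ is $f_i$-AT with $f_i(x)=d_{A_i}(x)-c_i$ and $f_i(v)=d_G(v)$ for the remaining vertices: here $c_i=0$ asks that $A_i$ be degree-AT, $c_i=1$ that $(A_i,h_x)$ be AT, $c_i\ge 2$ that $A_i$ be $(d_{A_i}-c_ih_x)$-AT, and $c_i\le-1$ is always attainable (acyclic orientation rooted at $x$). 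Writing $\operatorname{cap}(A_i)$ for the largest attainable $c_i$, we get that $(G,h_x)$ is AT iff $\sum_i\operatorname{cap}(A_i)\ge 2$. One then checks that $\operatorname{cap}(A_i)=-1$ when $A_i$ is a Gallai tree, that $\operatorname{cap}(A_i)=d_{A_i}(x)-1$ when $A_i-x$ is not a Gallai tree, and that otherwise $\operatorname{cap}(A_i)\in\{0,1,\dots,d_{A_i}(x)-1\}$, with $\operatorname{cap}(A_i)\ge c$ governed by whether $A_i$ is $(d_{A_i}-ch_x)$-AT; peeling Gallai-tree $B$-lobes exactly as above reduces this last question to the $2$-connected case, and substituting back yields the characterization for cutvertices.

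The ``if'' (that is, ``not AT'') direction is then assembled from tools already in hand: members of $\D$ are not AT by Proposition~\ref{prop:easyD}, the \hyperlink{target:SubdivideTwice}{Stretching Lemma} and Corollary~\ref{SubdivideConstructor} propagate non-AT along double subdivisions, and Lemma~\ref{CutvertexPatch} (again using that Gallai-tree lobes are never degree-AT) glues non-AT pairs across cutvertices. The step I expect to be the main obstacle is the cutvertex bookkeeping of the third paragraph: pinning down $\operatorname{cap}(A_i)$ precisely requires an auxiliary classification of which $2$-connected pairs are $(d-ch_x)$-AT for $c\ge 2$ --- a genuine strengthening of Lemma~\ref{TwoConnectedClassification}, whose ``only if'' part, by the contrapositive of \hyperlink{target:thmB}{Theorem~B}, amounts to exhibiting bad list assignments built from the non-degree-choosability of $B-x$ --- and then organizing the resulting case analysis so that the statement of Theorem~\ref{thm:1connected} remains readable.
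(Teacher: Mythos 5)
Your overall skeleton matches the paper's: handle $d(x)\le 2$ via Lemma~\ref{DegreeTwoVertex}, peel Gallai-tree lobes with Lemma~\ref{CutvertexPatch} (using that such a lobe is $f$-AT with $f$ exceeding its degree by one at the attachment vertex but is never degree-AT), and reduce to the 2-connected Main Lemma via the \hyperlink{target:InducedSubgraph}{Subgraph Lemma}. Your non-cutvertex analysis is essentially the paper's. The cutvertex case, however, has two genuine problems.

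First, the accounting is wrong. Iterating Lemma~\ref{CutvertexPatch} over $k$ lobes gives $f(x)=\sum_i f_i(x)-(k-1)$, so with $f_i(x)=d_{A_i}(x)-c_i$ the condition for $(G,h_x)$ to be AT is $\sum_i c_i\le\,$\mbox{$2-k$} being attainable, i.e.\ $\sum_i(c_i+1)\ge 2$ --- not $\sum_i c_i\ge 2$. Under your threshold, a graph with two degree-AT lobes ($c_1=c_2=0$), or one lobe with $(A,h_x)$ AT and the rest Gallai trees ($c_i=1$ and $c_j=-1$), would be declared not AT; both are in fact AT, and both occur in the negation of Case~(5). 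Second, you defer to ``an auxiliary classification of which 2-connected pairs are $(d-ch_x)$-AT for $c\ge 2$,'' which you correctly identify as not following from Lemma~\ref{TwoConnectedClassification} --- but you neither prove it nor need it. Once the inequality is corrected, each summand $\operatorname{cap}(A_i)+1$ is a non-negative integer, so the sum is at least $2$ exactly when some lobe has $\operatorname{cap}\ge 1$ (i.e.\ $(A_i,h_x)$ is AT) or at least two lobes have $\operatorname{cap}\ge 0$ (i.e.\ are not Gallai trees). Only the thresholds $c\in\{-1,0,1\}$ ever matter, and these are precisely what Lemma~\ref{DegreeATClassification}, the Main Lemma, and your own non-cutvertex analysis applied to each lobe already decide. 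This is how the paper closes the case in two lines; as written, your proof leaves its hardest step to an unproved (and unnecessary) strengthening.
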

	
	\begin{enumerate}
		\item[(1)] $G$ is a Gallai tree; or
		\item[(2)] $d(x) = 1$; or
		\item[(3)] $d(x) = 2$ and $G-x$ has a component that is a Gallai tree; or
		\item[(4)] $x$ is not a cutvertex, for the block $B$ of $G$ containing $x$,
		we have $(B,h_x) \in \D$, and every other block of $G$ is complete or
		an odd cycle; or
		\item[(5)]
		$x$ is a cutvertex, all but at most one $x$-lobe of $G$, say $A$, is a Gallai
		tree, and either:
		(i) $d_A(x) = 1$; or
		(ii) $d_A(x)=2$ and $A-x$ is a Gallai tree; or 
		(iii) for the block $B$ of $A$ containing $x$, 
		we have $(B,h_x)\in \D$ and all $B$-lobes of $A$ are Gallai trees. 
	\end{enumerate}
	
	\begin{proof}
		First, we check that if any of Cases (1)--(5) hold, then $(G, h_x)$ is not AT.  
		Cases (1) and (2) are immediate.  Case (3) follows from
		Lemma~\ref{DegreeTwoVertex}.
		Consider Case (4). 
		By Proposition~\ref{prop:easyD}, we know $(B,h_x)$ is not AT.
		Now $(G,h_x)$ is not AT by repeated application of Lemma~\ref{CutvertexPatch}.
		Finally, Case (5) follows from Cases (2), (3), and (4), by 
		Lemma~\ref{CutvertexPatch}.
		
		Now, for the other direction, suppose $(G, h_x)$ is not AT and none of Cases
		(1)--(5) hold.  By Lemma~\ref{DegreeTwoVertex}, and not (2) and not (3), we
		must have $d(x) \ge 3$. 
		Suppose $x$ is a cutvertex.  Now, by not (5), either (a) at least two $x$-lobes
		of $G$ are not Gallai trees or (b) $(H, h_x)$ is AT for some $x$-lobe $H$ of
		$G$.  In each case, $(G,h_x)$ is AT by Lemma~\ref{CutvertexPatch}, which is a
		contradiction.  
		
		So assume instead that $x$ is not a cutvertex.  Suppose the block $B$ of $G$
		containing $x$ is complete or $(B,h_x) \in \D$.  By not (1) and not (4),
		some $B$-lobe $H$ of $G$ is not a Gallai tree.  Since $H$ is a subgraph of
		$G-x$, and $G-x$ is connected, Lemma~\ref{DegreeATClassification} and the
		\hyperlink{target:InducedSubgraph}{Subgraph Lemma} imply that $G-x$ is degree-AT;
		hence, $(G,h_x)$ is also AT.  So, we conclude that $B$ is not complete and
		$(B,h_x)\notin \D$.
		First suppose that $d(x)=2$.  By not (3), we know that $G-x$ is not a Gallai
		tree.  Lemma~\ref{DegreeATClassification} implies that $G-x$ is degree-AT.
		So, again, the \hyperlink{target:InducedSubgraph}{Subgraph Lemma} shows that $(G,h_x)$
		is AT.  Now assume instead that $d(x)\ge 3$.  Since $(B,h_x)\notin \D$, now
		Lemma~\ref{TwoConnectedClassification} implies that $(B,h_x)$ is AT; 
		once more, the \hyperlink{target:InducedSubgraph}{Subgraph Lemma} implies that $(G,h_x)$
		is AT.
	\end{proof}
	
	\section{Choosability and Paintability}
	\label{extensions}
	As we mentioned in the introduction, Alon and Tarsi showed that if a graph $G$
	is $f$-AT, then $G$ is also $f$-choosable.  \emph{Online list coloring}, also
	called \emph{painting} is similar to list coloring, but now the list for each
	vertex is progressively revealed, as the graph is colored. 
	Schauz~\cite{schauz2010flexible} extended the Alon--Tarsi theorem, to show that
	if $G$ is $f$-AT, then $G$ is also $f$-paintable (which we define formally
	below).  In this section, we use our
	characterization of pairs $(G,h_x)$ that are not AT to prove characterizations
	of pairs $(G,h_x)$ that are not paintable and that are not choosable.  More
	precisely, a pair $(G,h_x)$ is \emph{choosable}\aaside{choosable pair}{-.3cm}
	if $G$ has a proper coloring from its lists $L$ whenever $L$ is such that
	$|L(x)|=d(x)-1$ and $|L(v)|=d(v)$ for all other $v$; otherwise $(G,h_x)$ is
	\emph{not choosable}.  A pair being \emph{paintable}\aaside{paintable
		pair}{-.3cm} is defined analogously.  We characterize all pairs $(G,h_x)$,
	where $G$ is connected and $(G,h_x)$ is not choosable (resp. not paintable).  
	In fact, we will see that these characterizations, for both choosability and
	paintability, are identical to that for pairs that are not AT.  
	
	For completeness, we include the following definition of $f$-paintable.
	Schauz~\cite{schauz2009mr} gave a more intuitive (yet equivalent) definition,
	in terms of a two player game.  We say that $G$ is \emph{$f$-paintable}
	\aside{$f$-paintable} if either (i) $G$ is empty or (ii) $f(v) \ge 1$ for all
	$v \in V(G)$ and for every $S \subseteq V(G)$ there is an independent set $I
	\subseteq S$ such that $G-I$ is $f'$-paintable where $f'(v) \DefinedAs f(v)$
	for all $v \in V(G) - S$ and $f'(v) \DefinedAs f(v) - 1$ for all $v \in S - I$.
	
	Since all pairs $(G,h_x)$ that are AT are also both paintable and choosable, it
	suffices to show that every pair $(G,h_x)$ that is not AT is also not choosable
	(here we use that if a pair is paintable, then it is also choosable).
	
	\begin{thm}
		For every connected graph $G$, the pair $(G,h_x)$ is not choosable if and only
		if $(G,h_x)$ is not AT.  Thus, the same characterization holds for pairs that
		are not paintable.
	\end{thm}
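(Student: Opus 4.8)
The forward direction is immediate: by Theorem~B (and, for paintability, by Schauz's extension of it~\cite{schauz2010flexible}), $f$-AT implies $f$-paintable implies $f$-choosable, so any pair that is not choosable is not AT, and any pair that is not paintable is not AT. It therefore remains to prove the converse, that every pair $(G,h_x)$ which is not AT is not choosable; the paintability statement then follows formally, since a non-choosable pair is automatically non-paintable. The plan is to invoke Theorem~\ref{thm:1connected}, which tells us that a non-AT pair $(G,h_x)$ satisfies one of the five structural conditions (1)--(5), and in each case to exhibit a list assignment $L$ with $|L(x)| = d_G(x)-1$ and $|L(v)| = d_G(v)$ for all other $v$ that admits no proper coloring. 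The only external inputs are Proposition~\ref{prop:easyD}, whose proof in fact constructs exactly such a bad list assignment for every pair in $\D$, and the classical fact that every connected Gallai tree fails to be degree-choosable (so it has a \emph{bad degree-list assignment}: lists of sizes equal to degrees with no proper coloring).

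Cases (1)--(3) are quick. If $G$ is a Gallai tree, take a bad degree-list assignment and delete one color from $L(x)$; the result still has no proper coloring, so $(G,h_x)$ is not choosable. If $d(x)=1$, then $|L(x)|=0$ and there is nothing to prove. If $d(x)=2$ and some component $C$ of $G-x$ is a connected Gallai tree, fix a bad degree-list assignment $N$ of $C$, pick a fresh color $c_0$ (not appearing in $N$), set $L(x)=\{c_0\}$, add $c_0$ to the list of each neighbor of $x$ lying in $C$ (this makes those lists the correct size), use $N$ on the rest of $C$, and use arbitrary lists of the correct sizes on the rest of $G-x$. In any proper coloring $x$ must receive $c_0$, so each neighbor of $x$ in $C$ is pushed into its $N$-list and the restriction to $C$ is a proper $N$-coloring of $C$, a contradiction.

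Cases (4) and (5) carry the weight, so I would isolate one reusable construction. In Case~(4), write $G$ as the block $B$ containing $x$, which satisfies $(B,h_x)\in\D$, together with its Gallai-tree lobes $T_1,\dots,T_k$ attached at vertices $w_1,\dots,w_k\in V(B)\setminus\{x\}$ (using that $x$ is not a cutvertex and that every block other than $B$ is complete or an odd cycle). Let $L_B$ be the bad list assignment for $(B,h_x)$ from Proposition~\ref{prop:easyD}, and for each $j$ let $M_j$ be a bad degree-list assignment of the connected Gallai tree $T_j$, all colors of all $M_j$ chosen fresh, pairwise disjoint, and disjoint from the colors of $L_B$. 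Put $L(u)=L_B(u)$ on $V(B)\setminus\{w_1,\dots,w_k\}$, $L(v)=M_j(v)$ on $V(T_j)\setminus\{w_j\}$, and $L(w_j)=L_B(w_j)\cup M_j(w_j)$; the sizes come out to $d_G$ everywhere and $d_G(x)-1$ at $x$. If a proper coloring $\varphi$ existed, then either $\varphi(w_j)\in M_j(w_j)$ for some $j$, so the restriction of $\varphi$ to $T_j$ is a genuine $M_j$-coloring of $T_j$, impossible; or $\varphi(w_j)\in L_B(w_j)$ for every $j$, so the restriction of $\varphi$ to $B$ is a genuine $L_B$-coloring of $B$, also impossible. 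For Case~(5) let $A_1,\dots,A_m$ be the $x$-lobes with $A_2,\dots,A_m$ Gallai trees and $A:=A_1$ the exceptional lobe. Since $A-x$ is connected, $x$ is not a cutvertex of $A$, so $A$ (with its central block and lobes) is in exactly the situation of Case~(4) when $A$ is of type~(iii), or of Case~(2)/(3) when $A$ is of type~(i)/(ii); build a bad list assignment $L_A$ on $A$ accordingly, in private colors, with $|L_A(x)|=d_A(x)-1$. Give each Gallai-tree lobe $A_i$ ($i\ge 2$) a bad degree-list assignment $M_i$, all in further private colors, and let $L$ agree with $L_A$ on $A-x$ and with $M_i$ on $A_i-x$, with $L(x)$ the disjoint union of $L_A(x)$ and the sets $M_i(x)$, of total size $\bigl(d_A(x)-1\bigr)+\sum_{i\ge 2}d_{A_i}(x)=d_G(x)-1$. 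In any proper coloring, $\varphi(x)$ lies in exactly one of these blocks: if in some $M_i(x)$ with $i\ge 2$, the restriction to $A_i$ is a genuine $M_i$-coloring, impossible; if in $L_A(x)$, the restriction to $A$ contradicts the assignment built there. When $d_A(x)=1$ the block $L_A(x)$ is empty, so the first alternative always applies.

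The substantive content here is small: beyond Proposition~\ref{prop:easyD} and non-degree-choosability of Gallai trees, the argument is just the dichotomy that in a hypothetical proper coloring the color of $x$ (or of an attachment vertex $w_j$) must commit to one piece of the decomposition, and that piece is already uncolorable. I expect the main obstacle to be bookkeeping rather than anything conceptual: keeping the color classes of the pieces genuinely disjoint and of exactly the right sizes (especially at $x$ and the $w_j$), and checking, for each of the three subtypes of the exceptional lobe in Case~(5), that the hypotheses of the Case~(4)/(2)/(3) construction really are met by that lobe—in particular that Proposition~\ref{prop:easyD} applies to its central block. With the converse in hand, the paintability characterization follows at once: since paintable implies choosable, a non-AT pair is non-choosable and hence non-paintable, while $f$-AT implies $f$-paintable by~\cite{schauz2010flexible}; thus $(G,h_x)$ is not paintable if and only if it is not AT, the same characterization as for choosability and for AT.
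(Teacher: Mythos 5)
Your proposal is correct and follows essentially the same route as the paper: reduce via Theorem~\ref{thm:1connected} to the five structural cases and, in each, build a non-colorable list assignment by giving each block/lobe its own private palette of the right sizes (Proposition~\ref{prop:easyD} lists on the block $B$, bad degree-lists on the Gallai-tree pieces) and taking unions at the attachment vertices. Your write-up just makes explicit the disjoint-palette bookkeeping that the paper leaves terse; no substantive difference.
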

	\begin{proof}
		As noted above, every pair that is AT is also choosable and paintable.  Thus, it
		suffices to show that each pair $(G,h_x)$ in Theorem~\ref{thm:1connected} is not
		choosable.
		
		To show that Gallai trees are not degree-choosable, assign to each block $B$ a
		list of colors $L_B$ such that $|L_B|=d_B(x)$ for each $x\in V(B)$; further, for
		all distinct blocks $B_1$ and $B_2$, we require that $L_{B_1}$ and $L_{B_2}$ are
		disjoint.  For each $v\in V(G)$, let $L(v)=\cup_{B_i\ni v}L_{B_i}$.  To show
		that $G$ is not colorable from these lists, we use induction on the number of
		blocks.  Let $B$ be an endblock and $x$ a cutvertex in $B$.  Let
		$G'=G\setminus(V(G)-x)$.  Since $B$ is complete or an odd cycle, $B$ has no
		coloring from $L_B$.  Thus any coloring $\varphi$ of $G$ from $L$ does not use
		$L_B$ on $x$.  Hence, $\varphi$ gives a coloring $\varphi'$ of $G'$ from its
		lists $L'$, where $L'(x)=L(x)\setminus L_B$ and $L'(v)=L(v)$ for all $v\in
		V(G)\setminus V(B)$.  This coloring $\varphi'$ of $G'$ contradicts the induction
		hypothesis.  Thus, $G$ has no coloring from $L$.
		
		Here we use a similar approach.  Consider a pair $(G,h_x)$ that satisfies one of
		Cases (1)--(5) in Theorem~\ref{thm:1connected}.  We show that $(G,h_x)$ is not
		choosable.  Case (1) is immediate by the previous paragraph.
		Case (2) is immediate, since $|L(x)|=0$.  For Case (3), give lists to the Gallai
		tree of $G-x$ as above; now let $L(x)=\{c\}$ for some new color $c$, and add $c$
		to the list of each neighbor of $x$.  Again $G$ cannot be colored from $L$.  For
		Case (4), assign lists to $V(B)$ as in Proposition~\ref{prop:easyD} and to the
		other blocks as above.  Again, $G$ has no coloring from
		these lists.  Finally, consider Case (5).  Assign lists for all blocks outside
		of $A$ as above, and assign lists for $A$ as above in Case (2), (3), or (4).
	\end{proof}
	
	To conclude this section, we consider labelings $h_{x,y}$, where $h_{x,y}(x)=
	h_{x,y}(y)=1$ and $h_{x,y}(v)=0$ for all other $v\in V(G)$.
	We show that the set of pairs $(G,h_{x,y})$ that are not AT differs from the set
	of that are not paintable.  Further, both sets differ from
	the set of pairs that are not choosable.
	It suffices to give a pair $(G_1,h_{x,y})$ that is choosable but not paintable
	and a second pair $(G_2,h_{x,y})$ that is paintable but not AT.
	
	
\begin{figure}[hbt]
		\centering
		\begin{tikzpicture}[scale = 11]
		\tikzstyle{VertexStyle} = []
		\tikzstyle{EdgeStyle} = []
		\tikzstyle{labeledStyle}=[shape = circle, minimum size = 6pt, inner sep = 1.2pt, draw]
		\tikzstyle{unlabeledStyle}=[shape = circle, minimum size = 6pt, inner sep = 1.2pt, draw, fill]
		\Vertex[style = labeledStyle, x = 0.850, y = 0.800, L = \small {$0$}]{v0}
		\Vertex[style = labeledStyle, x = 0.750, y = 0.700, L = \small {$1$}]{v1}
		\Vertex[style = labeledStyle, x = 0.950, y = 0.700, L = \small {$1$}]{v2}
		\Vertex[style = labeledStyle, x = 0.800, y = 0.550, L = \small {$0$}]{v3}
		\Vertex[style = labeledStyle, x = 0.900, y = 0.550, L = \small {$0$}]{v4}
		\Edge[label = \tiny {}, labelstyle={auto=right, fill=none}](v1)(v0)
		\Edge[label = \tiny {}, labelstyle={auto=right, fill=none}](v2)(v0)
		\Edge[label = \tiny {}, labelstyle={auto=right, fill=none}](v3)(v1)
		\Edge[label = \tiny {}, labelstyle={auto=right, fill=none}](v3)(v2)
		\Edge[label = \tiny {}, labelstyle={auto=right, fill=none}](v3)(v4)
		\Edge[label = \tiny {}, labelstyle={auto=right, fill=none}](v4)(v1)
		\Edge[label = \tiny {}, labelstyle={auto=right, fill=none}](v4)(v2)
		%\end{tikzpicture}
		%\begin{tikzpicture}[scale = 15]
		%\tikzstyle{VertexStyle} = []
		%\tikzstyle{EdgeStyle} = []
		%\tikzstyle{labeledStyle}=[shape = circle, minimum size = 6pt, inner sep = 1.2pt, draw]
		%\tikzstyle{unlabeledStyle}=[shape = circle, minimum size = 6pt, inner sep = 1.2pt, draw, fill]
		\begin{scope}[xshift=.12in]
		\Vertex[style = labeledStyle, x = 0.850, y = 0.800, L = \small {$0$}]{v0}
		\Vertex[style = labeledStyle, x = 0.750, y = 0.700, L = \small {$1$}]{v1}
		\Vertex[style = labeledStyle, x = 0.950, y = 0.700, L = \small {$1$}]{v2}
		\Vertex[style = labeledStyle, x = 0.800, y = 0.550, L = \small {$0$}]{v3}
		\Vertex[style = labeledStyle, x = 0.900, y = 0.550, L = \small {$0$}]{v4}
		\Edge[label = \tiny {}, labelstyle={auto=right, fill=none}](v1)(v0)
		\Edge[label = \tiny {}, labelstyle={auto=right, fill=none}](v2)(v0)
		\Edge[label = \tiny {}, labelstyle={auto=right, fill=none}](v3)(v1)
		\Edge[label = \tiny {}, labelstyle={auto=right, fill=none}](v3)(v2)
		\Edge[label = \tiny {}, labelstyle={auto=right, fill=none}](v4)(v1)
		\Edge[label = \tiny {}, labelstyle={auto=right, fill=none}](v4)(v2)
		\end{scope}
		\end{tikzpicture}
		\caption{
			The pair on the left is choosable, but not paintable.
			The pair on the right is paintable, but not AT.
			\label{fig:splits}
		}
	\end{figure}
	
	\begin{prop}
		The pair $(G_1,h_{x,y})$ on the left in Figure~\ref{fig:splits} is choosable,
		but not paintable.  The pair $(G_2,h_{x,y})$ on the right in
		Figure~\ref{fig:splits} is paintable, but not AT.
	\end{prop}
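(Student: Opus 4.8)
The plan is to establish the four assertions separately: the AT assertion by an edge count, and the list/painting assertions by small explicit constructions.

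\emph{$(G_2,h_{x,y})$ is not AT, but is paintable.} Here $G_2=K_{2,3}$ and $f:=d_{G_2}-h_{x,y}\equiv 2$, so any orientation $D$ with $d^+_D(v)\le f(v)-1=1$ for every $v$ would satisfy $|E(G_2)|=\sum_v d^+_D(v)\le 5<6$; hence $G_2$ has no such orientation and is not $f$-AT. For paintability I would give Painter a winning strategy in the painting game on $K_{2,3}$ with all budgets $2$. Write $A,B$ for the parts (of sizes $2$ and $3$). Since a vertex of budget $1$ must be coloured the next time it is presented, and any two vertices in the same part are non-adjacent, the invariant to maintain is that the still-present vertices of budget $1$ all lie in a single part. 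When Lister presents a set $S$ inside one part, Painter colours all of $S$; when $S$ meets both parts, Painter colours $S\cap A$ or $S\cap B$, choosing the side so that no budget-$1$ vertex is left uncoloured and the invariant survives. A short check over the (tiny) state space shows such a choice always exists; alternatively one may cite that $\theta_{2,2,2}$ is $2$-paintable.

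\emph{$(G_1,h_{x,y})$ is choosable.} Now $G_1=G_2+v_3v_4$, and we colour from lists with $|L(v_0)|=|L(v_1)|=|L(v_2)|=2$ and $|L(v_3)|=|L(v_4)|=3$. I would colour $v_0$ last, so since $|L(v_0)|=2$ it suffices to colour $G_1-v_0=K_4-v_1v_2$ so that $\{\varphi(v_1),\varphi(v_2)\}\ne L(v_0)$. Pick $c_1\in L(v_1)$ and $c_2\in L(v_2)$ (legal, as $v_1\nonadj v_2$). This choice fails to extend to $v_3,v_4$ only if, after removing $\{c_1,c_2\}$, both are left with a single common colour, forcing $L(v_3)=L(v_4)=\{c_1,c_2,c\}$ with $c_1\ne c_2$. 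So if $L(v_3)\ne L(v_4)$ every choice of $(c_1,c_2)$ extends, and I pick one with $\{c_1,c_2\}\ne L(v_0)$ (possible since $|L(v_1)|=2$, taking $c_1=c_2$ if needed). If $L(v_3)=L(v_4)=\{a,b,c\}$, I take $c_2\notin\{a,b,c\}$ when possible and then choose $c_1$ avoiding $L(v_0)$; otherwise $L(v_1),L(v_2)\subseteq\{a,b,c\}$, and since two $2$-subsets of a $3$-set intersect I take $c_1=c_2\in L(v_1)\cap L(v_2)$. In each case $v_3$ and $v_4$ retain two available colours, so the edge $v_3v_4$ is coloured, completing a proper colouring of $G_1$.

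\emph{$(G_1,h_{x,y})$ is not paintable --- the main obstacle.} The goal is a winning Lister strategy in the painting game on $G_1$ with budgets $(2,2,2,3,3)$ on $(v_0,v_1,v_2,v_3,v_4)$. The plan is to have Lister drive the budgets down to a state in which some triangle of $G_1$ (one of $\{v_1,v_3,v_4\}$, $\{v_2,v_3,v_4\}$) still has every budget at most $2$, or an edge still has both budgets $1$; from such a state Lister wins by presenting that clique. The real work is showing Lister can force such a state against every Painter response: Painter has tempting escapes --- colouring $v_0$ (which leaves the degree-paintable $K_4-v_1v_2$), colouring $\{v_1,v_2\}$ (which leaves an isolated vertex and an edge), or colouring $v_3$ or $v_4$ (which leaves a $4$-cycle) --- so Lister must always present a set each of whose possible colourings already yields a non-paintable remnant. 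I would carry this out by backward induction over the finite (and small) set of states reachable from $(2,2,2,3,3)$, in effect a short finite game-tree analysis that could also be verified by computer. Together with $f$-AT $\Rightarrow$ $f$-paintable $\Rightarrow$ $f$-choosable, the four facts give the strict separations asserted by the proposition.
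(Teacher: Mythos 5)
The decisive problem is your fourth claim, the non-paintability of $(G_1,h_{x,y})$, which you yourself flag as ``the main obstacle'' and then leave as an unperformed backward induction over a game tree. As written, this is a plan rather than a proof: you never exhibit a Lister move, and the assertion that Lister ``can force'' a bad state is exactly what needs proving. In fact no search is needed, because Lister wins in one move. Let Lister present $S=\{v_1,v_3,v_4\}$, the vertex set of one triangle, whose budgets are $(2,3,3)$. Since $S$ induces a triangle, Painter's independent set $I\subseteq S$ is empty or a singleton. If $I=\emptyset$, the triangle $v_1v_3v_4$ is left with budgets $(1,2,2)$, and a triangle with all budgets at most $2$ is not choosable, hence not paintable. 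If $I=\{v_1\}$, the remnant contains the other triangle $v_2v_3v_4$ with budgets $(2,2,2)$, again not paintable. If $I=\{v_3\}$ (or symmetrically $\{v_4\}$), the remnant is the $4$-cycle $v_1v_0v_2v_4$ with budgets $(1,2,2,2)$, which is not even choosable: take $L(v_1)=\{a\}$, $L(v_0)=\{a,b\}$, $L(v_4)=\{a,c\}$, $L(v_2)=\{b,c\}$. So every Painter response leaves a non-paintable induced remnant, and your deferred game-tree analysis collapses to this single case split; without some such explicit strategy the separation ``choosable but not paintable'' is not established.

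The other three claims are in better shape. The edge-count argument that $(G_2,h_{x,y})$ is not AT is exactly the paper's. Your choosability argument for $G_1$ is a correct, if compressed, case analysis on $L(v_3),L(v_4)$; the paper's version instead splits on whether $L(v_1)\cap L(v_2)=\emptyset$ and finishes greedily, but both are fine. For the paintability of $G_2=K_{2,3}$ with all budgets $2$, your part-based case split is the right idea, but the invariant ``all budget-$1$ vertices lie in one part'' is not obviously self-propagating under the rule you state, and ``a short check over the state space'' again defers the work. The paper settles it in one round: if $S$ meets some part twice, colour an independent set of size two and check the remnant; otherwise colour the vertex of $S$ in the $2$-side (leaving a claw) or all of $S$ (leaving $C_4$). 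Either carry out that analysis or give a precise reference for the known fact that $\theta_{2,2,2}$ is $2$-paintable.
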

	\begin{proof}
		Let $(G_1,h_{x,y})$ denote the pair on the left, where $x$ and $y$ are the
		vertices labeled 1.  Let $(G_2,h_{x,y})$ denote the pair on the right, where
		$x$ and $y$ are the vertices labeled 1.

		We first show that $(G_1,h_{x,y})$ is choosable.  Let $L$ denote the list
		assignment.  If there exists $c\in L(x)\cap L(y)$, then use $c$ to color $x$ and
		$y$, and color the remaining vertices greedily.  So suppose there does not
		exist such a color $c$.  Let $z$ be a vertex in both triangles and note that
		there exist $c\in (L(x)\cup L(y))\setminus L(z)$.  By symmetry, assume that
		$c\in L(x)$.  Color $x$ with $c$, and color $G_1-x$ greedily, starting with the
		vertex of degree 2 and ending with $z$.
		
		We now show that $(G_1,h_{x,y})$ is not paintable. Let $S$ be the vertices of one 
		triangle.  By definition, there must be $I \subseteq S$ such that $G_1-I$ is 
		$f'$-paintable, where $f'(v) \DefinedAs f(v)$ for $v \in V(G_1) - S$ and $f'(v)
		\DefinedAs f(v) - 1$ for $v \in S - I$.  $I$ must have one vertex, $w$.  There
		are two choices for $w$; either $w$ is in two triangles or not.  If $w$ is not
		in two triangles, then $G_1 - w$ is a triangle with a pendant edge, where
		the vertices on the triangle all have list size 2, so $G_1-w$ is not paintable.
		If $w$ is one of the vertices in two triangles, then $G_1-w$ is a 4-cycle with
		list sizes alternating $1, 2, 1, 2$.  Again $G_1-I$ is not paintable (nor choosable).
		
		To see that $(G_2,h_{x,y})$ is not AT, note that any good orientation would need
		indegrees summing to at least 7, but $G_2$ has only 6 edges.  Now we show that
		$(G_2,h_{x,y})$ is paintable.  Note that $G_2$ is isomorphic to $K_{2,3}$, the complete
		biparite graph.  Call the parts $X$ and $Y$, with $|X|=2$ and $|Y|=3$.  If $S$
		includes at least two vertices of $X$ or at least two vertices of $Y$, take $I$
		to be an independent set of size at least 2.  It is easy to check that $G-I$ is
		paintable, since it induces either an independent set or a path, where each
		endvertex has more colors than neighbors.  So assume that $S$ contains at most
		one vertex from each of $X$ and $Y$.  If $S$ contains a vertex of $X$, then
		color it.  The resulting graph is paintable, since it is a claw, $K_{1,3}$,
		with at most one leaf having a single color and all other vertices having two
		colors.  Finally, suppose $S$ contains only a single vertex of $Y$. Let $I=S$. 
		The resulting graph is $C_4$, which is degree-paintable (since it is degree-AT).
	\end{proof}
	
	A graph is \emph{unstretched} \aside{unstretched} if it has no induced path
	$u_1v_1v_2u_2$ where $d(v_1)=d(v_2)=2$ (as in Corollary~\ref{ReduceP4Cor}).
	We finish with the following question.
	
	\begin{question}
		Are there only finitely many unstretched, 2-connected graphs $G$ such that 
		$(G,h_{x,y})$ is not choosable (resp.~paintable, AT)?  More generally, let
		$h_{x_1,\ldots,x_k}$ be a labeling that assigns 1 to vertices $x_1,\ldots,x_k$
		and 0 to all others.  Are there only finitely many unstretched, 2-connected
		graphs $G$ such that $(G, h_{x_1,\ldots,x_k})$ is not choosable
		(resp.~paintable, AT)?
	\end{question}
	
	%\section{Applications}
	%When this is in a critical graph, we get $h_x$-minimality for free.  If $x$ has
	%$t \ge 5$ neighbors in one component of the low vertex subgraph, then those
	%neighbors are all in one $K_t$ block.  If $t = 4$, then the neighbors are
	%either a $K_4$ block or we create the Moser
	%Spindle.
	
	\bibliographystyle{abbrv}
	\bibliography{GraphColoring1}
	
\end{document}